\documentclass{article}

\usepackage{macro}

\usepackage[most]{tcolorbox}

\AtEveryBibitem{%
    \clearname{editor}%
    \clearname{editora}%
    \clearname{editorb}%
    \clearname{editorc}%
}

\hypersetup{
    colorlinks=true,
    linkcolor=purple,
    citecolor=forestgreen,
}

\title{Sharp One-\!Dimensional Sub-Gaussian Comparison in Convex Order}
\author{
    Yihan Zhang\thanks{
        School of Mathematics, University of Bristol. 
        Email: \href{mailto:yihan.zhang@bristol.ac.uk}{\texttt{yihan.zhang@bristol.ac.uk}}.
    }
}

\begin{document}
\maketitle

\begin{abstract}
We prove that any random variable $X$ whose moment generating function is pointwise upper bounded by that of $ G \sim \mathcal{N}(0,1) $ must be dominated by $ G/\mathbb{E}[|G|] $ in convex order, meaning $ \mathbb{E}[f(X)] \le \mathbb{E}[f(G/\mathbb{E}[|G|])] $ for all convex $f$. This is sharp as witnessed by $ X \sim \mathrm{Unif}(\{-1,1\}) $ and $ f(x) = |x| $. 
\end{abstract}

% \clearpage
% \pagenumbering{roman}
% \newpage
% \tableofcontents
% \newpage

% \pagenumbering{arabic}

\newcounter{asmpctr} % counter for assumptions
\setcounter{asmpctr}{\value{enumi}}

%%%%%%%%%%%%%%%%%%%%%%%%%%%%%%
%%%%%%%%%%%%%%%%%%%%%%%%%%%%%%
%%%%%%%%%%%%%%%%%%%%%%%%%%%%%%

\section{Introduction}

It is recently established in \cite[Theorem 1.1]{vanHandel} that there exists a universal constant $c\in(0,\infty)$ such that any centered sub-Gaussian random vector $X$ in $ \bbR^d $ is dominated in convex order by the $c$ multiple of a standard Gaussian vector. 
This motivates the question of characterizing the minimal value of $c$. 
The precise notion of $1$-sub-Gaussianity adopted by \cite{vanHandel} requires 
\begin{align}
&&
    \expt{X} &= 0_d ; & 
    \prob{ \abs{\inprod{v}{X}} > t } &\le \min\brace{ 1, 2 \cdot e^{-t^2/2} } \textnormal{ for all $ t \ge 0 $ and $ v\in\bbS^{d-1} $.} &
& \label{eqn:notion1} 
\end{align}
Under this notion, \cite{Davis_Power} settles the optimal $c$ for the case $d=1$. 

It is well known that \Cref{eqn:notion1} is equivalent, \emph{up to a universal constant}, to 
\begin{align}
    \expt{e^{\lambda \inprod{v}{X}}} &\le e^{\lambda^2/2} \textnormal{ for all $ \lambda \in \bbR $ and $ v\in\bbS^{d-1} $;} \label{eqn:notion2}
\end{align}
see e.g.\ \cite[Definition 3.4.1 and Proposition 2.6.1]{Vershynin}. 
Specifically, there exist universal constants $ c_1,c_2>0 $ such that for any $X$ satisfying \Cref{eqn:notion1}, $ c_1 X $ satisfies \Cref{eqn:notion2}; and for any $X$ satisfying \Cref{eqn:notion2}, $ c_2 X $ satisfies \Cref{eqn:notion1}. 
Therefore the convex dominance result in \cite{vanHandel} also holds under the notion \Cref{eqn:notion2}. 
However, since the conversion between \Cref{eqn:notion1,eqn:notion2} is lossy by constants, it is unclear whether the optimal value of $c$ differs under these two notions. 
In particular, it is at least equally interesting and natural to characterize the analogous optimal constant under \Cref{eqn:notion2}. 

Focusing on the case $d=1$, this paper finds the optimal $c$ under \Cref{eqn:notion2} (see \Cref{thm:cmgf}) which turns out to be strictly smaller and arguably more interpretable than its counterpart identified by \cite{Davis_Power} under \Cref{eqn:notion1} (see \Cref{rk:compare}). 
Our result answers a question of Power \cite{Power}. 

The tractability of the one-dimensional case is unlocked by the hinge representation of convex functions which also plays a central role in \cite{Davis_Power}: any convex function $ f\colon\bbR\to\bbR $ can be written as 
\begin{align}
     f(x) &= ax + b + \int \max\{x-t,0\} \, \mu(\dd t) \label{eqn:hinge} 
\end{align} 
for some $a,b\in\bbR$ and a nonnegative Borel measure $\mu$ on $\bbR$; see \cite[Proposition 3.A.4]{Shaked_Shanthikumar}. 
Indeed, Taylor's theorem with integral remainder identifies $ (a,b,\mu) $ as $ (f'(0), f(0), f'') $ where $ f'' $ is interpreted in the sense of Schwartz distributions. 
Building on this, our analysis proceeds via a key reduction that allows us to focus only on two-point distributions; see \Cref{lem:Y}. 
This aligns with the behavior of the extremal distribution and function saturating the convex dominance relation which we identify to be $ X \sim \unif(\{-1,1\}) $ and $f(x) = \abs{x} = - x + 2 \max\{x,0\}$ corresponding to taking $ (a,b,\mu) = (-1,0,2\delta_0) $ in \Cref{eqn:hinge}. 
See \Cref{eqn:emerge} for more details. 
It then remains to study convex dominance of two-point distributions. 
The main technical ingredient we establish here to conclude the result is a lower bound on the Gaussian isoperimetric function in terms of an elementary function which may be of independent interest; see \Cref{lem:I}. 

In the rest of the paper, we work solely with real-valued random variables. 
All $\log$ and $\exp$ are to the base $e$. 

\section{Main results}

A random variable $X$ is said to be \emph{$1$-sub-Gaussian (in the MGF sense)} if 
\begin{align}
    M_X(\lambda) \le e^{\lambda^2/2} \label{eqn:subG}
\end{align}
for all real $\lambda$, where $ M_X(\lambda) \coloneqq \expt{ e^{\lambda X} } $. 
Denote by $ \cG $ the set of all such random variables. 
For two random variables $ X $ and $ Y $, we write $ X \lecx Y $ if $X$ is dominated by $Y$ in convex order, meaning $ \expt{f(X)} \le \expt{f(Y)} $ for all convex functions $ f\colon\bbR\to\bbR $ such that both expectations exist. 
For $ G \sim \cN(0,1) $, define
\begin{align}
    \cmgf &\coloneqq \inf\brace{ c > 0 : \forall X\in\cG ,\, X \lecx c\,G } . \label{eqn:defc} 
\end{align}

The main deliverable of this paper is the theorem below proved in \Cref{sec:pf}. 

\begin{theorem}
\label{thm:cmgf}
$ \cmgf = \sqrt{\pi/2} $. 
\end{theorem}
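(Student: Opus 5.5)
The proof has two halves: a lower bound $\cmgf \ge \sqrt{\pi/2}$ from the extremal example, and an upper bound showing that every $X\in\cG$ satisfies $X \lecx \sqrt{\pi/2}\,G$.

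\emph{Lower bound.} Take $X\sim\unif(\{-1,1\})$. Then $M_X(\lambda)=\cosh\lambda\le e^{\lambda^2/2}$, so $X\in\cG$. Test against $f(x)=\abs{x}$: we get $\expt{\abs{X}}=1$ and $\expt{\abs{cG}}=c\sqrt{2/\pi}$. Dominance therefore forces $c\ge\sqrt{\pi/2}$.

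\emph{Upper bound: reduction to hinges.} Every $X\in\cG$ has mean $0$ (differentiate the MGF bound at $\lambda=0$) and finite moments of all orders. By the hinge representation \Cref{eqn:hinge}, the linear part $ax+b$ contributes equally to both sides. It therefore suffices to prove, for every $t\in\bbR$,
\[
\expt{(X-t)_+}\le \expt{(cG-t)_+}, \qquad c=\sqrt{\pi/2},
\]
and then integrate against $\mu$. Since $-X\in\cG$ and $G$ is symmetric, we may assume $t\ge 0$, using $(x-t)_+=(t-x)_++x-t$.

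Now fix $t\ge0$ and maximize $\expt{(X-t)_+}$ over $\cG$. My plan is the reduction to two-point distributions, in the spirit of \Cref{lem:Y}. Given $X$, define $Y=\expt{X\mid \ind\{X>t\}}$, which takes at most two values $a\le t<b$. Then:
\begin{itemize}
\item $\expt{(Y-t)_+}=\expt{(X-t)_+}$, because $X-t$ is positive on the event $\{X>t\}$;
\item $Y\lecx X$ by Jensen's inequality, so $M_Y\le M_X\le e^{\lambda^2/2}$ and hence $Y\in\cG$.
\end{itemize}
So we only need to bound $\sup\expt{(Y-t)_+}$ over centered two-point $Y\in\cG$. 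Write $Y$ as taking value $b$ with probability $p$ and $a=-pb/(1-p)$ otherwise. The objective is $p(b-t)$, subject to the single-variable constraint
\[
(1-p)e^{\lambda a}+pe^{\lambda b}\le e^{\lambda^2/2} \quad\text{for all }\lambda .
\]

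\emph{Main obstacle.} The hard step is to show
\[
\sup p(b-t)\le \expt{(cG-t)_+}=c\,\varphi(t/c)-t\,\bar\Phi(t/c).
\]
I would proceed as follows.
\begin{enumerate}
\item Get a usable necessary condition on $(p,b)$ from the MGF constraint. The Chernoff-type bound $p\le e^{-b^2/2}$ is too weak. The sharp two-point constraint is roughly $b\le \sqrt{2\log(1/p)}$-type, with the correct behaviour near $p=1/2$, where $b=1$ is allowed.
\item Optimize $p(b-t)$ under that condition.
\item Compare the result with the Gaussian stop-loss function. Reparametrizing by $u=t/c$, the comparison amounts to a lower bound on the Gaussian isoperimetric function $I=\varphi\circ\Phi^{-1}$ by an elementary function, as \Cref{lem:I} provides.
\end{enumerate}
I expect the equality case at $t=0$, $p=1/2$, $b=1$ to make the inequality tight. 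So the bound from step 1 must be exact near $p=1/2$ and the comparison in step 3 must be handled with care. I would check that case and the tail regime $t\to\infty$ separately before doing the full monotonicity argument.
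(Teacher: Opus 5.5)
Your lower bound and your reductions (hinge functions, mean zero, $t\ge 0$ by symmetry, and the two-point law $Y=\mathbb{E}[X\mid \mathbf{1}\{X>t\}]$) are correct and coincide with the paper's. The gap is that the proposal stops where the real argument begins, and the one concrete suggestion you give for Step~1 cannot work. ``$b\le\sqrt{2\log(1/p)}$'' is exactly the Chernoff bound $p\le e^{-b^2/2}$ you had just dismissed. At $p=1/2$ it allows $pb=\sqrt{\log(2)/2}\approx 0.589$, whereas $\sqrt{\pi/2}\,I(1/2)=1/2$; since \Cref{lem:I} is an equality at $p=1/2$, no loss is affordable there. The missing idea is to test the MGF constraint at one well-chosen $\lambda$. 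Write $b=(1-p)\nu$ and $\beta=\log\frac{1-p}{p}$. At $\lambda=2\beta/\nu$ the two-point MGF collapses to $M_Y(\lambda)=\bigl(\frac{1-p}{p}\bigr)^{1-2p}=e^{(1-2p)\beta}$. Comparing with $e^{\lambda^2/2}=e^{2\beta^2/\nu^2}$ gives $\nu^2\le 2\beta/(1-2p)$ for $p\neq 1/2$. At $p=1/2$ the second-order expansion of the MGF bound gives $\mathrm{Var}(Y)=p(1-p)\nu^2\le 1$, i.e.\ $\nu\le 2$, which is the limiting value. Hence
\[
pb=p(1-p)\nu\le p(1-p)\sqrt{\frac{2\log\frac{1-p}{p}}{1-2p}}\le\sqrt{\frac{\pi}{2}}\,I(p),
\]
and the last step is exactly \Cref{lem:I}. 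By the Kearns--Saul inequality this condition is also sufficient for $Y$ to be $1$-sub-Gaussian, so it is precisely the sharp two-point constraint your Step~1 was looking for.

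Your Steps~2--3 are also organized around the wrong variable. You do not need to maximize $p(b-t)$ over $p$ at each fixed $t$. Reparametrizing by $u=t/c$ does not by itself yield an inequality about $I$, because the $p$ of the given two-point law has nothing to do with $1-\Phi(t/c)$. What turns the pointwise-in-$p$ bound $pb\le c\,I(p)$, with $c=\sqrt{\pi/2}$, into the hinge inequality for all $t$ simultaneously is convexity of $S(t)=\mathbb{E}[(cG-t)_+]$. At $t_p=c\,\Phi^{-1}(1-p)$ one has $S'(t_p)=-p$, and the tangent line there is $c\,I(p)-pt$. Therefore $S(t)\ge c\,I(p)-pt\ge p(b-t)=\mathbb{E}[(Y-t)_+]$ for every $t\ge 0$ with $t<b$. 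With these two ingredients supplied, your plan becomes the paper's proof; without them it does not yet connect the MGF constraint to \Cref{lem:I}.
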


\begin{remark}
\label{rk:compare}
There exist other notions of sub-Gaussianity in the literature. 
For instance, a random variable $ X $ is said to be \emph{$1$-sub-Gaussian in the tail sense} if $ \expt{X} = 0 $ and 
\begin{align}
    \prob{\abs{X} > t} \le \min\brace{1 , 2 \cdot e^{-t^2/2}} , \notag 
\end{align}
for all $ t\ge0 $. 
One can then define $ \ctail $ in a way similar to \Cref{eqn:defc} where $ \cG $ is replaced with the collection of all random variables that are $1$-sub-Gaussian in the tail sense. 
The value of $ \ctail $ is identified in \cite[Theorem 1]{Davis_Power} which differs from $ \cmgf $ identified in \Cref{thm:cmgf}. 
We refer to \cite[Equations (3) to (6)]{Davis_Power} for the precise definition of $ \ctail $. 
Numerically, $ \ctail \approx 2.30952 $; see \cite[Remark 2]{Davis_Power}. 
In our case, $ \sqrt{\pi/2} \approx 1.25331 $ emerges as $ 1/\expt{\abs{G}} $; see \Cref{eqn:emerge}.  
\end{remark}

\begin{remark}
Note that since $ x \mapsto e^{\lambda x} $ is convex for any $\lambda\in\bbR$ and $ M_G(\lambda) = e^{\lambda^2/2} $, any $ X \lecx G $ must satisfy \Cref{eqn:subG}. 
\Cref{thm:cmgf} can be viewed as a converse to this, up to a necessary constant $ \sqrt{\pi/2} $. 
\end{remark}

More generally, a random vector $ X \in \bbR^d $ (where $ d\in\bbZ_{\ge1} $) is called \emph{$1$-sub-Gaussian (in the MGF sense)} if \Cref{eqn:notion2} holds. 
Denote by $ \cG(d) $ the set of all such random vectors. 
The notion of convex domination can be defined analogously against all convex functions $ f\colon\bbR^d\to\bbR $. 
For $ G \sim \cN(0_d,I_d) $, let 
\begin{align}
    \cmgf(d) &\coloneqq \inf\brace{ c > 0 : \forall X\in\cG(d) ,\, X \lecx c\,G } . \notag 
\end{align}
Our next result shows that the sharp constant for $d=1$ identified in \Cref{thm:cmgf} is not necessarily sharp for all $d\ge2$. 
This follows by exhibiting a lower bound on $ \cmgf(d) $ that can be strictly larger than $ \cmgf \equiv \cmgf(1) $ for some $ d\ge2 $; see \Cref{app:highd} for a proof. 
Therefore, $ \cmgf(d) $ is genuinely dimension dependent. 

% \begin{theorem}
% \label{thm:2d}
% $ \cmgf(2) \ge \frac{8}{9} \sqrt{\pi\log(2)} $. 
% \end{theorem}

% \begin{remark}
% \Cref{thm:2d} is proved by identifying an explicit sub-Gaussian random vector $ X\in\bbR^2 $ (see \Cref{eqn:X}) and a convex function $ f\colon\bbR^2\to\bbR $ (see \Cref{eqn:f}). 
% In fact, the constant $ \frac{8}{9} \sqrt{\pi\log(2)} $ is sharp for this particular construction of $X$ (see \Cref{lem:X}). 
% However, we are unable to prove a matching upper bound on $ \cmgf(2) $ or find another example that improves upon the lower bound in \Cref{thm:2d}. 
% Characterizing $ \cmgf(2) $ (and indeed $ \cmgf(d) $ for every $ d\in\bbZ_{\ge2} $) is therefore left open. 
% \end{remark}

For $ d\in\bbZ_{\ge2} $, define 
\begin{align}
&&
    \sigma_d &\coloneqq \sqrt{\frac{d^2 - 1}{2d\log(d)}} , & 
    m_{d} &\coloneqq \expt{\max_{i\in[d]} Z_i} , & 
& \label{eqn:sigmad}
\end{align}
where $ Z_1, \cdots, Z_d \iid \cN(0,1) $. 

\begin{theorem}
\label{thm:highd}
For any $ d\in\bbZ_{\ge2} $, $ \cmgf(d) \ge \frac{d}{\sqrt{d+1} \, \sigma_d \, m_{d+1}} $. 
\end{theorem}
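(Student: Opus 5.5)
The plan is to prove the bound by an explicit construction: a sub-Gaussian random vector uniform on the vertices of a scaled regular simplex, tested against the support function of that simplex.

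\textbf{Construction.} Let $v_1,\dots,v_{d+1}\in\bbR^d$ be the vertices of a centered regular simplex, normalized so that $\inprod{v_i}{v_j} = \delta_{ij} - \frac{1}{d+1}$. Concretely, take $e_i - \frac{1}{d+1}\mathbf 1$ inside the hyperplane $\mathbf 1^\perp\subset\bbR^{d+1}$, identified with $\bbR^d$. Then $\abs{v_i}^2 = \frac{d}{d+1}$, $\sum_i v_i = 0$, and the frame identity $\sum_i \inprod{u}{v_i}^2 = \abs{u}^2$ holds. Set
\[
X = \frac{\sqrt{d+1}}{\sigma_d}\, V, \qquad V \sim \unif(\{v_1,\dots,v_{d+1}\}),
\]
and use the convex test function $f(x) = \max_{i} \inprod{v_i}{x}$.

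\textbf{Comparing the two sides.} On the atom $X = \frac{\sqrt{d+1}}{\sigma_d}v_j$ the maximum is attained at $i=j$. Hence
\[
\expt{f(X)} = \frac{\sqrt{d+1}}{\sigma_d}\cdot\frac{d}{d+1} = \frac{d}{\sqrt{d+1}\,\sigma_d}.
\]
For $G\sim\cN(0_d,I_d)$, the vector $(\inprod{v_i}{G})_{i}$ has covariance $I_{d+1} - \frac{1}{d+1}\mathbf 1\mathbf 1^\top$. It therefore has the law of $(Z_i - \bar Z)_{i\in[d+1]}$, which gives
\[
\expt{f(cG)} = c\, m_{d+1}.
\]
So $X\lecx cG$ forces $c \ge \frac{d}{\sqrt{d+1}\,\sigma_d\, m_{d+1}}$, which is exactly the claimed bound, provided $X\in\cG(d)$.

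\textbf{Sub-Gaussianity, the main obstacle.} Fix a unit vector $u$ and write $a_i = \frac{\sqrt{d+1}}{\sigma_d}\inprod{u}{v_i}$. By the frame identity the $a_i$ satisfy $\sum_i a_i = 0$ and $\sum_i a_i^2 = \frac{d+1}{\sigma_d^2}$. We must show
\[
\frac{1}{d+1}\sum_i e^{\lambda a_i} \le e^{\lambda^2/2}.
\]
Since $\lambda$ can be absorbed into the scaling, it suffices, for each fixed $\lambda$, to maximize $\sum_i e^{\lambda a_i}$ over this compact set. This is a relaxation (not every such $a$ comes from some $u$), but it costs nothing, because the maximizer will turn out to be realized by $u = v_1/\abs{v_1}$.

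\emph{Reduction to two values.} At a maximizer, Lagrange conditions give $\lambda e^{\lambda a_i} = \alpha + \beta a_i$ for all $i$. An exponential meets a line in at most two points, so the $a_i$ take at most two values: one value on $k$ coordinates and another on the remaining $d+1-k$. Then $\inprod{u}{X}$ is a centered two-point variable: a scaled, centered Bernoulli$(p)$ with $p = k/(d+1)$ and variance $1/\sigma_d^2$.

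\emph{Kearns--Saul formula.} The optimal MGF variance proxy of a centered Bernoulli$(p)$ with gap $h$ is $h^2\frac{1-2p}{2\log((1-p)/p)}$. With the variance fixed at $p(1-p)h^2 = 1/\sigma_d^2$, this becomes
\[
\frac{1}{\sigma_d^2}\cdot\frac{1-2p}{2p(1-p)\log((1-p)/p)}.
\]
This expression is symmetric under $p\mapsto 1-p$ and decreasing on $(0,\tfrac12]$; this monotonicity is a one-variable calculus check. So the worst case is $k=1$ (or $k=d$), realized by $u = v_1/\abs{v_1}$. There the values are $\sqrt d/\sigma_d$ with probability $\frac1{d+1}$ and $-1/(\sigma_d\sqrt d)$ otherwise. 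The gap is $h = \frac{d+1}{\sigma_d\sqrt d}$, and the proxy equals
\[
h^2\cdot\frac{d-1}{2(d+1)\log d} = \frac{d^2-1}{2d\log(d)\,\sigma_d^2} = 1.
\]
Hence $X\in\cG(d)$.

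The delicate points are the Lagrange/two-value reduction, including boundary and degenerate cases, and the monotonicity in $p$.
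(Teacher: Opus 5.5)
Your proposal is correct, and its skeleton is the paper's: the same simplex-supported $X$ (your $\frac{\sqrt{d+1}}{\sigma_d}v_j$ is the paper's $u_j/\sigma_d$), the same test function up to the factor $\sqrt{d+1}$, and the same Gaussian covariance identity. It also uses the same Lagrange two-value reduction and the Kearns--Saul inequality.

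The one genuine difference is how you compare the two-point candidates across $k$.
\begin{itemize}
\item The paper (\Cref{lem:K}) first shows that, for every fixed $s\ge0$, the exact MGF value is maximized at $k=1$. It does this by proving that $F_s(a)=\frac{e^{sa}+a^2e^{-s/a}}{1+a^2}$ is nondecreasing, via the auxiliary function $e^r(r-2)+r+2$. Only then does it apply Kearns--Saul to the $k=1$ law.
\item You instead apply Kearns--Saul to every $k$ and compare the resulting variance proxies at fixed variance.
\end{itemize}

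The monotonicity check you defer is immediate. With $1-2p=\tanh(t)$, your expression $\frac{1-2p}{2p(1-p)\log((1-p)/p)}$ becomes $\frac{\sinh(2t)}{2t}$, the reciprocal of the paper's $z(t)$ from \Cref{app:ineq}. This increases in $t$, so it decreases in $p$ on $(0,1/2]$. It equals $\sigma_d^2$ at $p=\frac{1}{d+1}$, so every proxy is at most $1$.

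Your route is shorter and avoids the $F_s$ calculus entirely. The paper's route gives something stronger: the equality in \Cref{lem:K}, i.e.\ that the $k=1$ configuration is the worst direction pointwise in $\lambda$. The theorem does not need that.

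The ``delicate points'' you flag are benign. The relaxed constraint set is a smooth sphere inside a hyperplane with linearly independent constraint gradients, so there is no boundary. The all-equal configuration is excluded by the norm constraint, and $\lambda=0$ is trivial.
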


\Cref{thm:highd} is proved by identifying an explicit sub-Gaussian random vector $ X\in\bbR^d $ and a convex function $ f\colon\bbR^d\to\bbR $ which are natural high-dimensional generalizations of $ X \sim \unif(\{-1,1\}) $ and $ f(x) = \abs{x} $ leading to a sharp lower bound for $d=1$ in \Cref{thm:cmgf}. 
Specifically, the proof of \Cref{thm:highd} constitutes analyzing $ X \in \bbR^d $ uniformly distributed on the vertices of a regular simplex (suitably scaled to meet the $ 1 $-sub-Gaussian condition) and a piecewise linear function $ f \colon \bbR^d\to\bbR $ supported on the cone partition of $ \bbR^d $ induced by the vertices of the simplex.
See \Cref{eqn:fd,eqn:Xd} for details of the constructions of $X$ and $ f $, respectively. 

\begin{remark}[$d=2$]
\label{rk:2d}
For $d=2$, the lower bound in \Cref{thm:highd} evaluates to $ \frac{8}{9} \sqrt{\pi\log(2)} \approx 1.31170 $ which is strictly larger than $ \cmgf \equiv \cmgf(1) $. 
In fact, the constant $ \frac{8}{9} \sqrt{\pi\log(2)} $ is sharp for this particular construction of $X$ (see \Cref{lem:X} in \Cref{app:sharp2}). 
However, we are unable to prove a matching upper bound on $ \cmgf(2) $ or find another example that improves upon this lower bound. 
Characterizing $ \cmgf(2) $ 
% (and indeed $ \cmgf(d) $ for every $ d\in\bbZ_{\ge2} $) 
is therefore left open. 
\end{remark}

\begin{remark}[General $d$]
\label{rk:highd}
\cite[Theorem 1.1]{vanHandel} implies that $ \cmgf(d) $ is bounded above uniformly in $d\ge1$. 
Moreover, it is not hard to see that $ \cmgf(d) $ is nondecreasing in $d$; see \Cref{lem:mono} in \Cref{app:rk}. 
Therefore, $ \lim_{d\to\infty} \cmgf(d) $ exists and is finite. 
In this regard, the lower bound in \Cref{thm:highd}, denoted by $ \ul{c}(d) $, has an undesirable feature that it does not respect the aforementioned monotonicity property. 
One way to see this is by computing the large-$d$ limit of $ \ul{c}(d) $:   
\begin{align}
    \ul{c}(d)
    &= \frac{d}{d+1} \cdot \sqrt{\frac{d}{d-1}} \cdot \frac{\sqrt{2\log(d)}}{m_{d+1}} 
    \underset{d\to\infty}{\to} 1 < \ul{c}(2) , \notag 
\end{align}
where we have used the well-known asymptotics for $ m_{d+1} $; see e.g.\ \cite[Theorem 5.1]{DasGupta_Lahiri_Stoyanov}.
Therefore as $ d\ge2 $ increases, $ \ul{c}(d) $ must strictly decrease at some point. 
Indeed, numerical evaluation reveals that the first drop occurs from $d=3$ to $d=4$: $ \ul{c}(3) \approx 1.32273 $, $ \ul{c}(4) \approx 1.32262 $. 
This shows that the lower bound $ \ul{c}(d) $ is not sharp for sufficiently large $d$. 
\end{remark}

\paragraph{Preliminaries.}

Denote by 
\begin{align}
&&
    \varphi(x) &= \frac{e^{-x^2/2}}{\sqrt{2\pi}} , & 
    \Phi(x) &= \int_{-\infty}^x \varphi(y) \diff y
& \notag 
\end{align}
the p.d.f.\ and c.d.f.\ of standard Gaussians, respectively. 
Let $ I(p) = \varphi(\Phi^{-1}(p)) $ be the Gaussian isoperimetric function. 
It will also be convenient to work with 
\begin{align}
&&
    \erf(x) &= \frac{2}{\sqrt{\pi}} \int_0^x e^{-y^2} \diff y , & 
    \erfc(x) &= 1 - \erf(x) . & 
& \notag 
\end{align}
One has the relation $ \erfc(x) = 2 (1 - \Phi(\sqrt{2}\,x)) $. 

En route to \Cref{thm:cmgf}, we establish a curious lower bound on $I$ which may be of independent interest. 

\begin{lemma}
\label{lem:I}
For any $ p\in[0,1] $, 
\begin{align}
    I(p) &\ge \frac{2}{\sqrt{\pi}} \cdot p(1-p) \sqrt{\frac{\log\frac{1-p}{p}}{1-2p}} . \label{eqn:I} 
\end{align}
\end{lemma}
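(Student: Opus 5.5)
By the symmetry $p\leftrightarrow 1-p$ (both sides of \Cref{eqn:I} are invariant, using $I(p)=I(1-p)$), it suffices to treat $p\in(0,1/2)$. The endpoints $p\in\{0,1\}$ are trivial since both sides vanish. At $p=1/2$ the right-hand side is read as its limit, which I expect to equal $\frac{2}{\sqrt\pi}\cdot\frac14\cdot\sqrt2=\frac{1}{\sqrt{2\pi}}=I(1/2)$, so \Cref{eqn:I} should be tight there.

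\textbf{Substitution.} I will reparametrize by $p=1/(1+e^{2s})$ with $s>0$. Then $\log\frac{1-p}{p}=2s$, $p(1-p)=\frac{1}{4\cosh^2 s}$ and $1-2p=\tanh s$, so the right-hand side of \Cref{eqn:I} should simplify to
\begin{align}
    \frac{1}{\sqrt{2\pi}}\sqrt{\frac{s}{\sinh s\,\cosh^3 s}} = \varphi\bigl(g(s)\bigr),
    \qquad
    g(s)\coloneqq\sqrt{\log\frac{\sinh s\,\cosh^3 s}{s}} . \notag
\end{align}
The argument of the logarithm exceeds $1$, so $g>0$. Write $x=\Phi^{-1}(1-p)>0$, so that $I(p)=\varphi(x)$. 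Since $\varphi$ is decreasing on $[0,\infty)$, the claim becomes $x\le g(s)$. Because $\Phi$ is increasing, this is equivalent to $\Phi(g(s))\ge 1-p$, i.e.
\begin{align}
    \psi(s)\coloneqq 1-\Phi\bigl(g(s)\bigr)-\frac{1}{1+e^{2s}}\le 0 \qquad\text{for all } s>0. \notag
\end{align}

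\textbf{Boundary behaviour and sign of $\psi'$.} As $s\downarrow0$ we have $g(s)\to0$, so $\psi(0^+)=\frac12-\frac12=0$. As $s\to\infty$ both terms tend to $0$, so $\psi(\infty)=0$. Hence it suffices to show that $\psi'$ is first negative and then positive, changing sign exactly once; a function vanishing at both ends with that derivative profile is $\le 0$ throughout. The derivative is
\begin{align}
    \psi'(s) = -\varphi\bigl(g(s)\bigr)\,g'(s)+\frac{1}{2\cosh^2 s}
    = \frac{1}{2\cosh^2 s}\Bigl(1-\sqrt{\tfrac{2}{\pi}}\,\cosh^2 s\,\sqrt{\tfrac{s}{\sinh s\cosh^3 s}}\;g'(s)\Bigr). \notag
\end{align}
So the sign of $\psi'$ is the sign of $1-K(s)$, where
\begin{align}
    K(s)\coloneqq \sqrt{2/\pi}\,\sqrt{s\cosh s/\sinh s}\;g'(s),
    \qquad
    g'(s)=\frac{h'(s)}{2\sqrt{h(s)}},\quad h\coloneqq g^2,\quad h'(s)=\coth s+3\tanh s-\tfrac1s. \notag
\end{align}

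Asymptotics indicate the expected profile. Near $0$, $h(s)\approx\frac53 s^2$, so $K(0^+)=\sqrt{2/\pi}\sqrt{5/3}\approx1.03>1$ and $\psi'<0$. As $s\to\infty$, $h\approx 4s$ and $h'\to4$, so $K\to\sqrt{2/\pi}\cdot\sqrt s\cdot\frac{1}{\sqrt s}=\sqrt{2/\pi}<1$ and $\psi'>0$. It therefore remains to show that $K$ crosses the level $1$ only once.

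\textbf{Main obstacle.} The crux is proving this single crossing, ideally by showing that $K$, or $K^2=\frac{2}{\pi}\cdot\frac{s\coth s\,(h')^2}{4h}$, is decreasing on $(0,\infty)$. I would first attempt to show directly that $(\log K^2)'\le0$. This reduces to an inequality among elementary hyperbolic functions and $h$ itself. I would try to separate it into a clean bound on $h$, namely upper and lower bounds of the form $h(s)\ge\frac{5}{3}\frac{s^2}{1+cs}$, combined with termwise monotonicity of $s\coth s\,(h')^2$ divided by such a bound.

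If global monotonicity fails, the fallback is to split $(0,\infty)$ into a bounded range and a tail. On the bounded range, the single crossing would be verified via Taylor or series bounds together with a finite interval-arithmetic check. On the tail, the $s\to\infty$ asymptotics already give $K<1$ with explicit error terms.
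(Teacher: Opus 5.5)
Your reduction is correct and is in fact the paper's own. Since $1-2p=\tanh s$, your $s$ is the paper's $t$, your $h=g^2=\log(\sinh s\cosh^3 s/s)$ is its $R$, your $\psi$ equals $-H/2$, and $K=M/\sqrt{2\pi}$ with $M=R'\sqrt{t/(uR)}$. The limits $K(0^+)=\sqrt{2/\pi}\sqrt{5/3}>1>\sqrt{2/\pi}=K(\infty)$ and the ``down, then up, zero at both ends'' conclusion are also how the paper finishes.

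The gap is the step you call the main obstacle: strict decrease of $K$, which is \Cref{lem:M}. It is essentially the entire technical content of the lemma, and you do not prove it. The route you sketch does not work as stated:
\begin{itemize}
\item Writing out $(\log K^2)'<0$ gives $BR<R'^2$ with $B=2R''+R'(1/t-u'/u)>0$. Any approach that replaces $R$ by an explicit bound therefore needs an \emph{upper} bound on $R$ that is sharp to fourth order. Indeed $R=\frac{5}{3}t^2-\frac{23}{90}t^4+O(t^6)$ while $R'^2/B=\frac{5}{3}t^2-\frac{2}{45}t^4+O(t^6)$. So a usable majorant must have $t^2$-coefficient exactly $\frac53$, no $t^3$ term, and $t^4$-coefficient in $[-\frac{23}{90},-\frac{2}{45}]$.
\item No member of your family $\frac53 s^2/(1+cs)$ qualifies. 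For $c>0$ it lies below $R$ near $0$; for $c=0$ its $t^4$-coefficient is $0$; for $c<0$ it has a pole.
\item If you instead meant to show that $s\coth s\,(h')^2/L$ is monotone for some pointwise bound $L$ on $h$, that implies nothing about monotonicity of $s\coth s\,(h')^2/h$.
\item The interval-arithmetic fallback is not carried out. A rigorous version would have to certify the sign of $K'$ near the crossing, not just values of $K$, and would also need explicit tail estimates.
\end{itemize}

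What is missing is the device the paper uses to eliminate the transcendental function $R$:
\begin{enumerate}
\item It proves $B>0$ via the identity $t^2B=1+4r-2z-3z^2$, where $r=t/\tanh t>1$ and $z=2t/\sinh(2t)\in(0,1)$.
\item It sets $D=R'^2/B-R$ and notes $D(0^+)=0$; in fact $D=\frac{19}{90}t^4+O(t^6)$.
\item It shows $D'>0$. Because $R$ enters $D$ linearly with coefficient $-1$, the derivative $D'=\frac{R'}{B^2}\bigl(2R''B-R'B'-B^2\bigr)$ involves only $R',R'',B,B'$. These are rational in $t$ and $u=\tanh t$.
\item The bracket factors as $-[t(3u^2+1)-u]\,P(t,u)/(t^4u^4)$. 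Then $P<0$ follows from $P/u^3=rz(12z-4)-3z^3-3z^2-z-1$ together with $2rz<1+z$. The latter is the power-series inequality $\sinh^2 t\cosh t+t\sinh t>2t^2\cosh t$.
\end{enumerate}
Without this step, or an equivalent rigorous argument, your proposal only shows that the lemma is equivalent to $K$ crossing the level $1$ exactly once. It does not prove the lemma itself.
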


The proof of \Cref{lem:I} is technical and is deferred to \Cref{app:ineq}. 

\begin{remark}
\label{rk:I}
In fact, as a consequence of the sharpness of $ \cmgf $ in \Cref{thm:cmgf}, the constant $ 2/\sqrt{\pi} $ on the RHS of \Cref{eqn:I} is also sharp in that it cannot be replaced with a larger constant. 
Numerical evaluation suggests that the lower bound in \Cref{eqn:I} is never lower than $I$ by more than $ 0.00720 $ uniformly on $[0,1]$ and the maximal gap occurs at $ p_0 \approx 0.10125 $ and $ 1 - p_0 \approx 0.89875 $. 
\end{remark}

\section{Proof of \texorpdfstring{\Cref{thm:cmgf}}{}}
\label{sec:pf}

We first quote a well-known result (see \cite[Theorem 3.A.1]{Shaked_Shanthikumar} or \cite[Proposition 4]{Davis_Power}) which asserts that it suffices to check only hinge functions for convex domination. 

For any random variable $ Z $ and any $t\in\bbR$, define $ S_Z(t) \coloneqq \expt{(Z - t)_+} $, where $ (\cdot)_+ \coloneqq \max\brace{\cdot,0} $. 

\begin{proposition}
\label{prop:hinge}
Let $ X,Y $ be integrable random variables. 
Then $ X \lecx Y $ if and only if $ \expt{X} = \expt{Y} $ and $ S_X(t) \le S_Y(t) $ for all $ t\in\bbR $. 
\end{proposition}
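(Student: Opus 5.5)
The plan is to get the forward direction directly from the definition, and the converse from the hinge representation \Cref{eqn:hinge} together with Tonelli's theorem.

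\emph{Forward direction.} Suppose $X \lecx Y$. The functions $x\mapsto x$ and $x\mapsto -x$ are both convex, and their expectations exist because $X$ and $Y$ are integrable. Applying the definition to both gives $\expt{X}\le\expt{Y}$ and $-\expt{X}\le-\expt{Y}$, so $\expt{X}=\expt{Y}$. For each fixed $t$, the hinge $x\mapsto(x-t)_+$ is convex and satisfies $0\le (x-t)_+\le |x|+|t|$, so its expectations are finite. The definition then gives $S_X(t)\le S_Y(t)$ directly.

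\emph{Converse.} Assume $\expt{X}=\expt{Y}$ and $S_X\le S_Y$ pointwise, and fix a convex $f$ for which both expectations exist. Rather than use \Cref{eqn:hinge} literally, where the integral over $t\to-\infty$ may fail to converge, I would use its two-sided version. With $a=f'_+(0)$, $b=f(0)$ and $\mu=f''\ge0$ as a measure, Taylor's formula with integral remainder gives
\begin{align}
    f(x) &= b + a x + \int_{(0,\infty)} (x-t)_+\,\mu(\dd t) + \int_{(-\infty,0]} (t-x)_+\,\mu(\dd t) . \notag
\end{align}
The boundary conventions at $t=0$ are to be checked against the choice of one-sided derivative. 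Each integrand here is nonnegative, which is what makes the next step legitimate.

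Taking expectations and applying Tonelli to each nonnegative piece separately gives
\begin{align}
    \expt{f(X)} &= b + a\,\expt{X} + \int_{(0,\infty)} S_X(t)\,\mu(\dd t) + \int_{(-\infty,0]} \expt{(t-X)_+}\,\mu(\dd t) , \notag
\end{align}
with values in $(-\infty,\infty]$. The same identity holds with $Y$ in place of $X$. For the last term I would use the identity $(t-x)_+=(x-t)_+-(x-t)$, which gives $\expt{(t-X)_+}=S_X(t)-\expt{X}+t$. Since the means agree, this term is also pointwise dominated by its $Y$-counterpart. Every term is therefore at most the corresponding term for $Y$, and hence $\expt{f(X)}\le\expt{f(Y)}$.

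\emph{Main obstacle.} The only delicate point is the bookkeeping around existence and infinite values. I would handle it as follows.
\begin{itemize}
    \item The affine part $b+ax$ is integrable because $X$ and $Y$ are.
    \item The two $\mu$-integrals are nonnegative, so the term-by-term comparison is valid even when they are $+\infty$.
    \item If $\expt{f(Y)}=+\infty$, the inequality is trivial.
    \item Otherwise both $\mu$-integrals for $Y$ are finite, and the pointwise domination forces the corresponding integrals for $X$ to be finite as well, so the comparison holds between finite quantities.
\end{itemize}
This also shows that $\expt{f(X)}$ exists whenever $\expt{f(Y)}$ does, which is consistent with the definition of $\lecx$.
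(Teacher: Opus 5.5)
Your proof is correct and complete. The paper does not prove this proposition itself but quotes it from \cite{Shaked_Shanthikumar} and \cite{Davis_Power}, and your argument is exactly the hinge-representation reasoning its introduction appeals to. Your switch to the two-sided form is necessary rather than cosmetic, since the one-sided \Cref{eqn:hinge} literally diverges for $f(x)=x^2$, where $\mu=2\,\dd t$. The boundary convention you flagged is also right as written: with $a=f'_+(0)$, the atom $\mu(\{0\})=f'_+(0)-f'_-(0)$ belongs to the $(t-x)_+$ piece on $(-\infty,0]$.
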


Next, we note that any sub-Gaussian random variable must be centered. 

\begin{lemma}
\label{lem:center}
For any $ X\in\cG $, one has $ \expt{X} = 0 $. 
\end{lemma}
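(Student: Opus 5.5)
We need to show that $X\in\cG$ forces $\expt{X}=0$. The plan is a first-order expansion of the MGF at $\lambda=0$, sandwiched between Jensen's inequality and the sub-Gaussian bound \Cref{eqn:subG}.

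First, $X$ is integrable, indeed it has all exponential moments. Taking $\lambda=\pm1$ in \Cref{eqn:subG} gives $\expt{e^{X}}<\infty$ and $\expt{e^{-X}}<\infty$. Since $\abs{x}\le e^{x}+e^{-x}$, we get $\expt{\abs{X}}<\infty$, so $\mu\coloneqq\expt{X}$ is a well-defined real number.

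Next, Jensen's inequality for the convex map $x\mapsto e^{\lambda x}$ gives $e^{\lambda\mu}\le M_X(\lambda)\le e^{\lambda^2/2}$ for every $\lambda\in\bbR$. Taking logarithms yields
\begin{align}
    \lambda\mu \le \frac{\lambda^2}{2} \qquad \text{for all } \lambda\in\bbR . \notag
\end{align}
For $\lambda>0$, dividing by $\lambda$ gives $\mu\le\lambda/2$, and letting $\lambda\downarrow0$ gives $\mu\le0$. For $\lambda<0$, dividing by $\lambda$ flips the inequality to $\mu\ge\lambda/2$, and letting $\lambda\uparrow0$ gives $\mu\ge0$. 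Hence $\mu=0$.

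The only point needing care is integrability, so that Jensen applies with a finite mean. This is handled by the first step, so there is no real obstacle. Alternatively, one could differentiate $M_X$ at $0$ using dominated convergence, with domination justified by the same exponential moments. Then $M_X(\lambda)-1\le e^{\lambda^2/2}-1$ with equality at $\lambda=0$ forces $M_X'(0)=0$, but the Jensen route avoids justifying differentiation.
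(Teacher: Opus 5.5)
Your proof is correct and is essentially the paper's argument: Jensen's inequality for $x\mapsto e^{\lambda x}$ with $\lambda$ of either sign gives $\lambda\mu\le\lambda^2/2$, and letting $\lambda\to0$ from each side forces $\mu=0$. Your explicit integrability check (via $\abs{x}\le e^{x}+e^{-x}$ and $\lambda=\pm1$) is a small detail the paper leaves implicit.
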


\begin{proof}
This is well known; see e.g.\ \cite[Exercise 2.23]{Vershynin}. 
We provide a short verification for completeness. 
Note that for any $ \lambda>0 $, both $ x \mapsto e^{\lambda x} $ and $ x \mapsto e^{-\lambda x} $ are convex. 
Thus Jensen's inequality gives
\begin{align}
&&
    e^{\lambda \expt{X}} &\le \expt{e^{\lambda X}} = M_X(\lambda) , & 
    e^{- \lambda \expt{X}} &\le \expt{e^{- \lambda X}} = M_X(-\lambda) . & 
& \notag 
\end{align}
Since $ X\in\cG $, the RHS's of both inequalities above are upper bounded by $ e^{\lambda^2/2} $, implying 
\begin{align}
    \max\brace{-\expt{X}, \expt{X}} &\le \lambda/2 . \notag 
\end{align}
Taking $ \lambda\downarrow0 $ shows $ \expt{X} = 0 $. 
\end{proof}

By \Cref{prop:hinge} and \Cref{lem:center}, our goal becomes proving 
\begin{align}
    S_X(t) \le S_{\sqrt{\pi/2}\,G}(t) \label{eqn:TODO}
\end{align}
for any $ X\in\cG $ and any $ t\in\bbR $. 

For an arbitrary $ t\in\bbR $, let $ \cE \coloneqq \brace{X>t} $ and $ p \coloneqq \prob{\cE} $. 
If $ p = 0 $, then $ X \le t $ almost surely and hence $ S_X(t) = 0 $. 
In this case, the desired result \Cref{eqn:TODO} is trivial. 
If $ p = 1 $, then $ X > t $ almost surely, and hence $ S_X(t) = \expt{X - t} = -t $. 
On the other hand, for any $ c\in\bbR $, 
\begin{align}
    S_{c\,G}(t) &= \expt{ (c\,G - t)_+ }
    \ge \expt{c\,G - t} = -t = S_X(t) , \notag 
\end{align}
as required by \Cref{eqn:TODO}. 

From now on, we assume $ p \in (0,1) $ so that both $ \cE $ and $ \cE^c $ have positive probabilities. 
Define 
\begin{align}
&&
    \mu_+ &\coloneqq \expt{ X \mid \cE } \ge t , & 
    \mu_- &\coloneqq \expt{X \mid \cE^c} \le t . & 
& \notag 
\end{align}
Let $ Y $ have the law 
\begin{align}
    p \delta_{\mu_+} + (1 - p) \delta_{\mu_-} . \label{eqn:Y} 
\end{align}

The following lemma makes the key reduction which allows us to focus only on two-point distributions. 

\begin{lemma}
\label{lem:Y}
Let $ X\in\cG $ and fix any $t\in\bbR$. 
Then the random variable $ Y $ defined in \Cref{eqn:Y} satisfies $ Y\in\cG $ and $ S_X(t) = S_{Y}(t) $. 
\end{lemma}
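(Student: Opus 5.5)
The proof has two parts: the identity $S_X(t) = S_Y(t)$, which is direct, and the membership $Y\in\cG$, which follows from conditional Jensen. Throughout, $p\in(0,1)$, and $\mu_\pm$ are finite because $X\in\cG$ has finite exponential moments, so it is integrable.

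\textbf{The identity.} On $\cE=\{X>t\}$ we have $(X-t)_+ = X-t$, and on $\cE^c$ we have $(X-t)_+=0$. Hence
\begin{align}
    S_X(t) = \expt{(X-t)\mathbf{1}_{\cE}} = p(\mu_+ - t). \notag
\end{align}
For $Y$, we use $\mu_+\ge t\ge\mu_-$. This gives $(\mu_+-t)_+=\mu_+-t$ and $(\mu_--t)_+=0$, so
\begin{align}
    S_Y(t) = p(\mu_+-t) + (1-p)\cdot 0 = S_X(t). \notag
\end{align}
The inequalities $\mu_+\ge t$ and $\mu_-\le t$ hold simply because we are averaging values that are $>t$ (respectively $\le t$).

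\textbf{Membership $Y\in\cG$.} Let $\cF=\sigma(\cE)$. Then $Y=\expt{X\mid\cF}$, since this conditional expectation equals $\mu_+$ on $\cE$ and $\mu_-$ on $\cE^c$. Its law is therefore exactly the two-point law in \Cref{eqn:Y}. For every $\lambda\in\bbR$, the map $x\mapsto e^{\lambda x}$ is convex, so conditional Jensen gives
\begin{align}
    e^{\lambda Y}\le \expt{e^{\lambda X}\mid\cF}. \notag
\end{align}
Taking expectations yields
\begin{align}
    M_Y(\lambda)\le M_X(\lambda)\le e^{\lambda^2/2}. \notag
\end{align}
Equivalently, without conditional expectations, one can apply Jensen separately on $\cE$ and $\cE^c$:
\begin{align}
    e^{\lambda\mu_+}\le \expt{e^{\lambda X}\mid\cE}, \qquad e^{\lambda\mu_-}\le\expt{e^{\lambda X}\mid\cE^c}, \notag
\end{align}
and then add these with weights $p$ and $1-p$.

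The only point that needs any care is that the conditional expectations are well defined and finite. This holds because $p\in(0,1)$ and $X$ is integrable. No step poses a real obstacle: the whole lemma reduces to the observation that $Y$ is a conditional expectation of $X$, so $Y\lecx X$, and both the MGF bound and the hinge value at the particular threshold $t$ are preserved.
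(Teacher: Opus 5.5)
Your proposal is correct and matches the paper's proof: you obtain $S_X(t)=p(\mu_+-t)=S_Y(t)$ from $\mu_-\le t\le\mu_+$, and $M_Y(\lambda)\le M_X(\lambda)\le e^{\lambda^2/2}$ from conditional Jensen for $x\mapsto e^{\lambda x}$ given $\sigma(\cE)$. The paper additionally records $\expt{Y}=\expt{X}=0$ by total expectation, which is not needed for $Y\in\cG$ under the MGF definition.
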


\begin{proof}
By the law of total expectation, $ \expt{Y} = \expt{\expt{X \mid \one_{\cE}}} = \expt{X} = 0 $. 
Moreover, for any $\lambda\in\bbR$, 
\begin{align}
    M_Y(\lambda) &= p e^{\lambda \mu_+} + (1-p) e^{\lambda \mu_-}
    = \expt{ \exp\paren{\lambda \expt{X \mid \one_{\cE}}} }
    \le \expt{ \expt{ \exp\paren{\lambda X} \mid \one_{\cE} } }
    = M_X(\lambda) \le e^{\lambda^2/2} , \notag 
\end{align}
where the first inequality is by applying the conditional Jensen's inequality $ f(\expt{X\mid Z}) \le \expt{f(X) \mid Z} $ to the convex function $ f(x) = e^{\lambda x} $ and the random variable $ Z = \one_{\cE} $. 
Therefore $ Y\in\cG $. 

Finally, 
\begin{align}
    S_X(t) &= \expt{(X-t)_+} = p \expt{X - t \mid \cE} 
    % = p (\expt{X \mid \cE} - t)
    = p(\mu_+ - t)
    = \expt{(Y - t)_+}
    = S_Y(t) , \notag 
\end{align}
as desired. 
\end{proof}

By \Cref{lem:Y}, for the purpose of understanding $ S_X(t) $, we only need to consider $ Y $ constructed in \Cref{eqn:Y}. 
We make the following change of variable: $ \nu (1 - p) = \mu_+ $. 
Since $ \expt{Y} = 0 $, we know that $ p \mu_+ + (1-p) \mu_- = 0 $. 
Then $ p \nu (1-p) + (1-p) \mu_- = 0 $, i.e., $ \mu_- = - p\nu $. 
Also, $ \mu_+>0, \nu>0 $ since $ \mu_+ \ge \mu_- $. 
To summarize, we have 
\begin{align}
    & -p\nu = \mu_- \le \min\{t,0\} \le \max\{t,0\} \le \mu_+ = (1-p)\nu . \notag 
\end{align}
We can write the law of $Y$ in a more symmetrical form $ p \delta_{(1-p)\nu} + (1-p) \delta_{- p\nu} $. 
The MGF of $Y$ is given by 
\begin{align}
    M_Y(\lambda) &= p e^{\lambda (1-p) \nu} + (1-p) e^{-\lambda p \nu} . \notag 
\end{align}

First consider the case $ p\ne1/2 $ and set $ \beta \coloneqq \log( (1-p)/p ) $. 
Then 
\begin{align}
    M_Y(2 \beta / \nu) &= p e^{2 \beta (1-p)} + (1-p) e^{- 2 \beta p}
    = p \paren{\frac{1-p}{p}}^{2(1-p)} + (1-p) \paren{\frac{1-p}{p}}^{-2p} \notag \\
    &= \brack{p \paren{\frac{1-p}{p}}^2 + (1-p)} \paren{\frac{1-p}{p}}^{-2p}
    = \brack{\frac{1-p}{p} + 1} (1-p) \paren{\frac{1-p}{p}}^{-2p} \notag \\
    &= \paren{\frac{1-p}{p}}^{1-2p}
    = e^{(1-2p) \beta} . \notag 
\end{align}
Since $ Y\in\cG $, this is upper bounded by 
\begin{align}
    \exp\paren{ (2\beta/\nu)^2/2 } &= e^{2\beta^2/\nu^2} . \notag 
\end{align}
Combining the preceding two displays, we have
\begin{align}
    \nu^2 &\le \frac{2\beta}{1 - 2p} , \label{eqn:nu} 
\end{align}
whose RHS is nonnegative since $ 1 - 2p $ and $ \beta $ have the same sign. 

Now consider $ p=1/2 $. 
Recall that $ M_Y(\lambda) \le e^{\lambda^2/2} $ for all $ \lambda\in\bbR $. 
Taking Taylor series at $0$ on both sides and using the fact $ \expt{Y} = 0 $, we have
\begin{align}
    1 + \frac{\lambda^2}{2} \expt{Y^2} + O(\lambda^3)
    &\le 1 + \frac{\lambda^2}{2} + O(\lambda^4) . \notag 
\end{align}
% Differentiating both sides of the inequality 
% \begin{align}
%     M_Y(\lambda) &= \sum_{k=0}^\infty \frac{\lambda^k \expt{Y^k}}{k!} \le e^{\lambda^2/2} \notag 
% \end{align}
% twice, we have 
% \begin{align}
%     \expt{Y^2} &\le (\lambda^2 + 1) e^{\lambda^2/2} . \notag 
% \end{align}
Taking $ \lambda \downarrow 0 $ yields $ \expt{Y^2} \le 1 $. 
An explicit calculation shows that 
\begin{align}
    \expt{Y^2} = p ( (1-p)\nu )^2 + (1-p) (p \nu)^2 
    = p(1-p) \nu^2 . \notag 
\end{align}
At $ p=1/2 $, we conclude $ \nu^2\le4 $. 
Note that $4$ is precisely the limit of the RHS of \Cref{eqn:nu} as $p\to1/2$. 
Therefore, \Cref{eqn:nu} holds for all $ p\in(0,1) $, where the upper bound is interpreted as its limit at the removable singularity $ p=1/2 $. 

Using \Cref{eqn:nu} and $ \expt{Y} = 0 $, recalling the definition of $\beta$, we have 
\begin{align}
    p \cdot (1-p)\nu &= (1-p) \cdot p\nu
    \le p(1-p) \sqrt{\frac{2\log\frac{1-p}{p}}{1-2p}} . \label{eqn:ineq0} 
\end{align}
By \Cref{lem:I}, the RHS above can be further upper bounded as follows
\begin{align}
    p(1-p) \sqrt{\frac{2\log\frac{1-p}{p}}{1-2p}}
    &\le \sqrt{\frac{\pi}{2}} I(p) . \label{eqn:ineq} 
\end{align}

Back to the proof of \Cref{eqn:TODO}, let us study its RHS. 
An explicit calculation shows that for any $ b>0 $, 
\begin{align}
&&
    S_{b\,G}(t) &= \frac{b}{\sqrt{2\pi}} \exp\paren{-\frac{t^2}{2b^2}} - \frac{t}{2} \erfc\paren{\frac{t}{\sqrt{2} \,b }} , & 
    S_{b\,G}'(t) &= -\frac{1}{2} \erfc\paren{\frac{t}{\sqrt{2} \, b}} . & 
& \label{eqn:explicit} 
\end{align}
Since $ t \mapsto (x-t)_+ $ is convex for any $x\in\bbR$, $ S_{b\,G}(t) $ is convex. 
Also, it trivially holds that 
\begin{align}
&&
    S_{b\,G}(t) &\ge 0 , & 
    S_{b\,G}(t) &\ge \expt{b\,G-t} = -t . & 
& \label{eqn:trivial}
\end{align}

Now take $ x = \Phi^{-1}(1-p) $. 
Using \Cref{eqn:explicit}, we have 
\begin{align}
    S_{b\,G}(b \, x) &= b \, \varphi(x) - b \, x (1 - \Phi(x)) 
    = b \, \varphi(x) - b \, x \, p , \notag \\
    S_{b\,G}'(b \, x) &= - (1 - \Phi(x)) = -p . \notag 
\end{align}
So the tangent line (parametrized by $t$) of $ S_{b\,G} $ at $ t = b\,x $ is 
\begin{align}
    S_{b\,G}'(b\,x) \cdot (t - b\,x) + S_{b\,G}(b\,x)
    &= b\,\varphi(x) - pt = b I(p) - pt , \notag 
\end{align}
where the last step follows since 
\begin{align}
    I(1-p) &= \varphi(\Phi^{-1}(1-p)) = \varphi(-\Phi^{-1}(p)) = \varphi(\Phi^{-1}(p)) = I(p) . \notag 
\end{align}
By convexity of $ S_{b\,G} $, the graph of $ S_{b\,G} $ lies above the tangent line passing through $ (b\,x, S_{b\,G}(b\,x)) $, and hence
\begin{align}
    S_{b\,G}(t) &\ge b I(p) - pt . \label{eqn:lb} 
\end{align}
From \Cref{eqn:ineq0,eqn:ineq}, 
\begin{align}
    p (1-p) \nu \le \sqrt{\frac{\pi}{2}} I(p) . \notag 
\end{align}
Using this back in \Cref{eqn:lb} and taking $ b = \sqrt{\pi/2} $, we get 
\begin{align}
    S_{\sqrt{\pi/2}\,G}(t) &\ge p ((1-p) \nu - t) . \notag 
\end{align}
Combining this with \Cref{eqn:trivial}, we have 
\begin{align}
    S_{\sqrt{\pi/2}\,G}(t) &\ge \max\brace{ 0 , -t , p( (1-p)\nu - t ) } . \label{eqn:S}
\end{align}

Next, we study $ S_Y(t) $. 
Noting that $ -p\nu \le 0\le (1-p)\nu $, we have the following explicit expressions of $ S_Y(t) $: 
\begin{itemize}
    \item for $ t\le-p\nu $, we have
    \begin{align}
        S_{Y}(t) &= (1-p)(-p\nu - t) + p( (1-p)\nu - t ) = -t ; \notag 
    \end{align}

    \item for $ -p\nu\le t\le (1-p)\nu $, we have $ S_{Y}(t) = p( (1-p)\nu - t ) $; 

    \item for $ t\ge(1-p)\nu $, we have $ S_{Y}(t) = 0 $. 
\end{itemize}
It is not hard to verify that these cases can be collectively written as: 
\begin{align}
    S_Y(t) &= \max\brace{ -t, p( (1-p)\nu - t ) , 0 } . \notag 
\end{align}
Contrasting this with \Cref{eqn:S}, we conclude \Cref{eqn:TODO}. 
This therefore proves that $ \cmgf \le \sqrt{\pi/2} $. 

Finally, we prove a matching lower bound on $ \cmgf $ by exhibiting a random variable and a convex function. 
Let $ X \sim \unif(\{-1,1\}) $. 
It can be easily verified that $ M_X(\lambda) = \cosh(\lambda) \le e^{\lambda^2/2} $ for all $\lambda\in\bbR$, so $ X\in\cG $. 
Take the convex function $ f(x) = \abs{x} $. 
Then $ \cmgf $ must satisfy 
\begin{align}
    1 &= \expt{\abs{X}} \le \expt{\abs{\cmgf G}} = \cmgf \sqrt{2/\pi} . \label{eqn:emerge} 
\end{align}
Thus $ \cmgf \ge \sqrt{\pi/2} $. 
This completes the proof of \Cref{thm:cmgf}. 

\section{Concluding remarks}

This paper identifies the smallest constant $ \cmgf $ to be $1/\expt{\abs{G}}$ such that any $1$-sub-Gaussian random variable in the sense of \Cref{eqn:subG} is dominated by $ \cmgf G $ in convex order.
The proof crucially relies on an equivalent characterization of convex domination (\Cref{prop:hinge}) which is in turn due to the hinge representation \Cref{eqn:hinge} of univariate convex functions. 
% To the best of our knowledge, a natural analogue of such representations in higher dimensions is not available. 
In higher dimensions, a natural analogue of \Cref{eqn:hinge} for $ f\colon\bbR^d\to\bbR $ convex reads
\begin{align}
    f(\tau \xi) &= a_\xi \tau + b_\xi + \int_0^\infty \max\brace{\tau - t, 0} \, \mu_\xi(\dd t) , \label{eqn:hinged}
\end{align}
where $ \tau\ge0 $ and $ \xi\in\bbS^{d-1} $. 
The triple $ (a_\xi,b_\xi,\mu_\xi) $ can be identified by Taylor's theorem as $ (f_\xi'(0), f_\xi(0), f_\xi'') $ where $ f_\xi \colon \tau \mapsto f(\tau \xi) $ and $ f_\xi'' $ is interpreted as a Schwartz distribution. 
Applying \Cref{eqn:hinged} to an arbitrary $ x\in\bbR^d \setminus \{0_d\} $ with $ \tau = \normtwo{x} $ and $ \xi = x/\normtwo{x} $, we get a potentially nonconvex function $ x \mapsto \max\brace{ \normtwo{x} - t, 0 } f''_{x/\normtwo{x}}(t) $. 
This prevents us from reducing the verification of convex domination to simple convex hinge functions. 
Our approach (notably the upper bound) therefore does not generalize to $d\ge2$ and leaves open the characterization of the sharp sub-Gaussian comparison constant under either \Cref{eqn:notion1} or \Cref{eqn:notion2}, the latter of which hopefully admits a more interpretable answer. 
The sharpness of the same constants for $d=1$ identified in \Cref{thm:cmgf} under \Cref{eqn:notion2} and in \cite[Theorem 1]{Davis_Power} under \Cref{eqn:notion1} is not expected to hold for $d\ge2$ (see \Cref{thm:highd}). 

% Inspecting the constructions of random variable / vector and convex functions for the lower bounds in \Cref{thm:cmgf,thm:2d} (see \Cref{eqn:emerge,eqn:X,eqn:f}), we expect it to be instructive to investigate their natural high-dimensional analogues. 
% Specifically, let $ u_0, \cdots, u_d \in \bbS^{d-1} $ be such that $ \inprod{u_i}{u_j} = -1/d $ for all $ i\ne j $. 
% Such vectors can be obtained by rigidly embedding the vertices of a regular simplex in $ \bbR^{d+1} $ into $ \bbR^d $ and shifting their centroid to the origin. 
% Let $ X $ be uniformly distributed on $ \brace{ ru_j }_{j=0}^d $ where $r$ is the largest constant such that $ X $ remains $ 1 $-sub-Gaussian. 
% Define the convex function $ f \colon \bbR^d\to\bbR $ as $ f(x) = \max_{0\le j\le d} \inprod{u_j}{x} $. 
% Analyzing $ X $ and $ f $ will lead to a lower bound on $ \cmgf(d) $. 
% However, we are not sufficiently confident to make conjectures regarding the extremal properties of such examples. 

Finally, regarding the key technical input \Cref{eqn:I}, we note that though it lower bounds the Gaussian isoperimetric function by a rather simple and convenient elementary function (see \Cref{rk:I}), our proof is purely analytical and offers no geometric insights. 
Understanding the geometric meaning and consequences of \Cref{eqn:I} may shed light on the high-dimensional case. 

%%%%%%%%%%%%%%%%%%%%%%%%%%%%%%
%%%%%%%%%%%%%%%%%%%%%%%%%%%%%%
%%%%%%%%%%%%%%%%%%%%%%%%%%%%%%

% \bibliographystyle{alpha}
% \bibliography{ref} 

\renewcommand*{\bibfont}{\normalfont\small}
\printbibliography

\appendix 

\section{Proof of \texorpdfstring{\Cref{lem:I}}{}}
\label{app:ineq}

It is easy to check that both sides of \Cref{eqn:I} are symmetric around $1/2$ in the sense that they are invariant under the change of variable $ p\mapsto1-p $. 
As $ p\downarrow0 $, both sides of \Cref{eqn:I} have limits $0$. 
So we can without loss of generality assume $ 0<p\le1/2 $. 
Let 
\begin{align}
&&
    x &\coloneqq \Phi^{-1}(1-p) \ge 0 , & 
    u &\coloneqq 1-2p = 2\Phi(x) - 1 = \erf(x/\sqrt{2}) \in [0,1) . & 
& \label{eqn:xu} 
\end{align}
Then 
\begin{align}
&&
    p(1-p) &= \frac{1-u^2}{4} , & 
    \log\frac{1-p}{p} &= 2 \arctanh(u) , &
    \sqrt{\pi/2}\, I(p) &= \sqrt{\pi/2}\, \varphi(x) = e^{-x^2/2}/2 . & 
& \notag 
\end{align}
% where we recall $ \arctanh(x) = \frac{1}{2} \log\paren{\frac{1+x}{1-x}} $. 
Now \Cref{eqn:I} becomes
\begin{align}
    (1-u^2) \sqrt{\frac{\arctanh(u)}{u}} &\le e^{-x^2/2} . \label{eqn:ineq_TODO} 
\end{align}

At $ x = 0 $, we have $ u = 0 $ and $ \arctanh(u) / u \to 1 $ as $ u\to0 $. 
Therefore the LHS of \Cref{eqn:ineq_TODO} equals $1$ by continuity, and the RHS is also equal to $1$. 
We assume henceforth $ x>0 $, or equivalently $ u>0 $. 

Let 
\begin{align}
    t &\coloneqq \arctanh(u) > 0 , \notag
\end{align}
so that $ u = \tanh(t) $. 
In the rest of the proof, we treat all variables as functions of $t$.
For instance, we will write $u(t)$ for $u$. 
Define 
\begin{align}
    R(t) &\coloneqq 4 \log(\cosh(t)) - \log\paren{\frac{t}{\tanh(t)}}
    = 4 \log(\cosh(t)) - \log\paren{\frac{t}{u(t)}} . \label{eqn:R} 
\end{align}
By $ \sech(t)^2 + \tanh(t)^2 = 1 = \sech(t) \cosh(t) $, we have 
\begin{align}
    1 - u(t)^2 &= \frac{1}{\cosh(t)^2} , \notag 
\end{align}
and hence
\begin{align}
    \exp\paren{-\frac{R(t)}{2}}
    &= \exp\paren{ -2 \log(\cosh(t)) + \frac{1}{2} \log\paren{\frac{t}{u(t)}} }
    = \frac{1}{\cosh(t)^2} \sqrt{\frac{t}{u(t)}} \notag \\
    &= (1-u(t)^2) \sqrt{\arctanh(u(t))/u(t)} , \label{eqn:expR}
\end{align}
which is precisely the LHS of \Cref{eqn:ineq_TODO}. 
Therefore, the desired inequality \Cref{eqn:ineq_TODO} is equivalent to $ \exp\paren{-R(t)/2} \le \exp\paren{-x^2/2} $, or $ R(t) \ge x^2 $. 
Since $ \erf $ is strictly increasing on $ (0,\infty) $, this is further equivalent to 
\begin{align}
    \erf\paren{\sqrt{\frac{R(t)}{2}}} &\ge \erf\paren{\frac{x}{\sqrt{2}}} = u(t) = \tanh(t) . \notag 
\end{align}
Let 
\begin{align}
    H(t) &\coloneqq \erf\paren{\sqrt{\frac{R(t)}{2}}} - \tanh(t) . \label{eqn:H}
\end{align}
By the above reductions, the sought result \Cref{eqn:ineq_TODO} follows from the lemma below. 

\begin{lemma}
\label{lem:H}
Consider the function $H$ defined in \Cref{eqn:H}. 
Then $H$ is nonnegative on $ (0,\infty) $. 
\end{lemma}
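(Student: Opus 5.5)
Since $\erf(\sqrt{R/2})$ is increasing in $R$, the claim $H\ge 0$ is equivalent to $R(t)\ge x(t)^2$, where $x(t)=\sqrt2\,\erf^{-1}(\tanh t)$. I will work with $H$ directly and use a boundary-plus-monotonicity argument.

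\textbf{Limiting behaviour.} First, show that $R(t)\ge0$ for $t>0$, so that $H$ is well defined. This follows because $\cosh^4 t\ge t/\tanh t$: with $s=\tanh t$ we have $\cosh^2 t = 1/(1-s^2)$ and $t/\tanh t=\arctanh(s)/s$. Expanding both in powers of $s^2$ gives $1+2s^2+3s^4+\dots$ versus $1+s^2/3+s^4/5+\dots$, so the comparison holds coefficientwise.

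Next, compute the behaviour of $H$ near $0$. Write $R(t)=2t^2-\tfrac{t^2}{3}+O(t^4)=\tfrac53 t^2+O(t^4)$. Then
\begin{align}
\erf\paren{\sqrt{R/2}}\approx \tfrac{2}{\sqrt\pi}\sqrt{5/6}\,t\approx 1.03\,t > t\approx\tanh t, \notag
\end{align}
so $H>0$ near $0$, with $H(0^+)=0$. As $t\to\infty$, we have $R\sim 4t$, so $1-\erf(\sqrt{2t})\approx e^{-2t}/\sqrt{2\pi t}$, while $1-\tanh t\approx 2e^{-2t}$. Hence $H\approx 2e^{-2t}\paren{1-\tfrac{1}{2\sqrt{2\pi t}}}>0$ for large $t$, and $H(\infty)=0$.

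\textbf{Interior argument.} The plan is to show that $H$ has a single critical point on $(0,\infty)$, that is, it increases and then decreases. Together with $H(0^+)=H(\infty)=0$, this gives $H\ge0$. Differentiating,
\begin{align}
H'(t)=\frac{R'(t)}{\sqrt{2\pi R(t)}}\,e^{-R(t)/2}-\sech^2 t. \notag
\end{align}
Using $e^{-R/2}=\sech^2 t\,\sqrt{t/\tanh t}$, the sign of $H'$ equals the sign of
\begin{align}
Q(t):=\frac{R'(t)}{\sqrt{2\pi R(t)}}\sqrt{\frac{t}{\tanh t}}-1, \notag
\end{align}
which involves only elementary functions, with $R'(t)=4\tanh t-\tfrac1t+\tfrac{1}{\sinh t\cosh t}$. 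I would then show that $Q$ is strictly decreasing, or at least changes sign exactly once. The boundary values support this: $Q(0^+)=\sqrt{10/(6\pi)}\cdot\ldots-1>0$ (the analogue of $1.03>1$), and $Q(\infty)=\lim 4/\sqrt{8\pi t}\cdot\sqrt t-1=\sqrt{2/\pi}-1<0$.

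\textbf{Main obstacle.} The hard step is establishing the single sign change of $Q$. My first approach would be to square $Q+1$ to remove the root, reducing the question to comparing $R'(t)^2\,t/\tanh t$ with $2\pi R(t)$. I would then study $D(t)=R'^2\,t/\tanh t-2\pi R$ and show it has one zero. To do this, I would split the domain into a small-$t$ region, handled by Taylor bounds with explicit remainders (using hyperbolic series with alternating or positive coefficients), and a large-$t$ region, handled by the bounds $\tanh t\ge1-2e^{-2t}$ and $\log\cosh t\ge t-\log 2$. On a compact middle interval I would fall back on interval arithmetic with Lipschitz bounds.

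If proving unimodality proves too delicate, the fallback is to bypass it. On the middle interval I would verify $H>0$ directly by a fine grid combined with derivative bounds. This is feasible because $H$ is bounded away from $0$ there, although the margin is only about $0.007$ (see \Cref{rk:I}), so the grid would need to be fairly fine.
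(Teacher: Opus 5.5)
There is a genuine gap: the step that carries the whole difficulty is left unproved. Your skeleton is the same as the paper's. You have $H(0^+)=H(\infty)=0$ and $H'(t)=(1-\tanh^2 t)\,Q(t)$ with $Q+1=M/\sqrt{2\pi}$, where $M=R'\sqrt{t/(R\tanh t)}$. The end values are $Q(0^+)=\sqrt{10/(3\pi)}-1>0>\sqrt{2/\pi}-1=Q(\infty)$, and unimodality of $H$ then gives the claim. But the fact that $Q$ changes sign exactly once is never established. You only describe how one might check it: Taylor bounds near $0$, tail bounds near $\infty$, and interval arithmetic or a grid in between. No cut points, remainder estimates or derivative bounds are given. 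As written, nothing rules out extra sign changes of $Q$ in the middle range, so unimodality, and hence $H\ge 0$, is not proved. The grid fallback is likewise only a plan. A rigorous computer-assisted proof along these lines may be possible, but the proposal does not supply it.

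The missing idea, which is how the paper closes this step in \Cref{lem:M}, is to prove the stronger statement that $M$ is strictly decreasing on all of $(0,\infty)$, independent of the level $\sqrt{2\pi}$. The method is to differentiate the transcendental part away. With $u=\tanh t$ and $u'=1-u^2$, the quantities $R'=3u+1/u-1/t$, $R''$ and $B:=2R''+R'(1/t-u'/u)$ are all rational in $(t,u)$; only $R$ itself contains logarithms. For $V=M^2=tR'^2/(uR)$, the derivative $V'$ has the sign of $BR-R'^2$. After showing $B>0$, it suffices to prove $R'^2/B-R>0$. This difference vanishes at $0^+$, so it is enough that its derivative is positive. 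That derivative has the sign of $2R''B-R'B'-B^2$, which no longer involves $R$ and equals $-\frac{t(3u^2+1)-u}{t^4u^4}P(t,u)$ for an explicit polynomial $P$. In the variables $r=t/u>1$ and $z=2t/\sinh 2t\in(0,1)$, one gets $P/u^3=rz(12z-4)-3z^3-3z^2-z-1$. This is negative: for $z\le 1/3$ the first term is nonpositive, and for $z>1/3$ one uses the elementary bound $2rz<1+z$. Your comparison function $R'^2\,t/\tanh t-2\pi R$ keeps $R$ undifferentiated, which is exactly why you are pushed toward numerics.

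A smaller slip is in your large-$t$ asymptotics, where you replace $R$ by $4t$ inside the exponential. In fact $R=4t-4\log 2-\log t+o(1)$, and these corrections change the constant. We have $e^{-R/2}=\sqrt{t/\tanh t}/\cosh^2 t\sim 4\sqrt t\,e^{-2t}$, so $1-\mathrm{erf}(\sqrt{R/2})\sim 2\sqrt{2/\pi}\,e^{-2t}$. Hence $H(t)\sim 2(1-\sqrt{2/\pi})\,e^{-2t}\approx 0.40\,e^{-2t}$, not $\approx 2e^{-2t}$; this agrees with your (correct) value of $Q(\infty)$. The sign conclusion survives, but any quantitative tail bound must keep these terms. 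Also, the $0.007$ in \Cref{rk:I} is the gap in the $I(p)$ inequality, not a margin for $H$. Finally, squaring $Q+1$ needs $R'>0$, which follows from $\tanh t<t$.
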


To prove \Cref{lem:H}, we need a sequence of preparatory lemmas. 

\begin{lemma}
\label{lem:R}
Consider the function $R$ defined in \Cref{eqn:R}. 
Then for all $ t>0 $, 
\begin{align}
&&
    R'(t) &= 3 u(t) + 1/u(t) - 1/t > 0 , & 
    \lim_{t\downarrow0} R(t) &= 0 , & 
    R(t) &> 0 . & 
& \label{eqn:R'}
\end{align}
\end{lemma}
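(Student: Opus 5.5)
We verify the three claims of \Cref{lem:R} in turn: first the derivative formula, then the limit at $0$, and then positivity of $R'$, which together with the limit gives $R>0$.

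\textbf{Derivative formula.} Write $R(t) = 4\log\cosh(t) - \log t + \log\tanh(t)$ and differentiate term by term:
\begin{align}
    R'(t) &= 4\tanh(t) - \frac{1}{t} + \frac{\sech(t)^2}{\tanh(t)} . \notag
\end{align}
Since $\sech(t)^2 = 1 - u(t)^2$, the last term equals $(1-u^2)/u = 1/u - u$. Hence $R'(t) = 3u + 1/u - 1/t$, as claimed.

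\textbf{Limit at $0$.} As $t\downarrow0$ we have $\cosh(t)\to1$ and $t/\tanh(t)\to1$. By continuity of $\log$, $R(t)\to 0$.

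\textbf{Positivity of $R'$.} This is the only real step. Since $3u>0$, it suffices to show $1/u \ge 1/t$, i.e.\ $\tanh(t) \le t$ for $t>0$. This holds because $\tanh(0)=0$ and $\tanh'(t) = \sech(t)^2 \le 1$. Then $R'(t) \ge 3u(t) > 0$ for all $t>0$. No sharper estimate (such as $\tanh t \ge t - t^3/3$) is needed.

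\textbf{Positivity of $R$.} Because $R' > 0$ on $(0,\infty)$, $R$ is strictly increasing there. For $0 < s < t$ we have $R(t) > R(s)$. Letting $s\downarrow0$ and using $\lim_{s\downarrow0} R(s) = 0$ gives $R(t) \ge 0$. Strictness follows by comparing with an intermediate point: $R(t) > R(t/2) \ge 0$.

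I expect no genuine obstacle here. The only care needed is algebraic bookkeeping in the derivative, namely converting $\sech^2/\tanh$ into $1/u - u$.
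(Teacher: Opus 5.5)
Your proposal is correct and follows essentially the same route as the paper. You compute $R'$ via $\sech(t)^2=1-u(t)^2$, get $R'>0$ from $\tanh(t)\le t$ (the paper uses the strict version, though your non-strict bound suffices because of the $3u(t)>0$ term), and deduce $R>0$ from monotonicity together with $\lim_{t\downarrow0}R(t)=0$.
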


\begin{proof}
By basic hyperbolic trigonometry,  
\begin{align}
    \frac{\dd}{\dd t} \tanh(t) &= \sech(t)^2 = 1 - \tanh(t)^2 , \notag 
\end{align}
which, in our notation, becomes
\begin{align}
    u'(t) &= 1 - u(t)^2 . \label{eqn:u'}
\end{align}
Also, 
\begin{align}
    \frac{\dd}{\dd t} \log\paren{\frac{t}{u(t)}}
    &= \frac{1}{t} - \frac{u'(t)}{u(t)} . \notag 
\end{align}
Using the preceding two displays and the definition of $R$ in \Cref{eqn:R}, we can explicitly compute $ R' $ as
\begin{align}
    R'(t) &= 4\frac{\sinh(t)}{\cosh(t)} - \paren{ \frac{1}{t} - \frac{u'(t)}{u(t)} }
    = 4 u(t) - 1/t + (1-u(t)^2)/u(t)
    = 3 u(t) - 1/t + 1/u(t) , \notag 
\end{align}
as claimed. 
Since $ 0 < \tanh(t) < t $ for every $t>0$, we immediately have $ R'(t) > 0 $. 
Also, it is easy to verify that $ R(t) \to 0 $ as $ t\downarrow0 $. 
Thus, $ R $ is positive on $ (0,\infty) $. 
\end{proof}

Now, define 
\begin{align}
    M(t) &\coloneqq R'(t) \sqrt{\frac{t}{u(t) R(t)}} . \label{eqn:M} 
\end{align}

The key step is to show monotonicity of $M$ which is achieved in the next lemma. 

\begin{lemma}
\label{lem:M}
Consider the function $M$ defined in \Cref{eqn:M}. 
Then $ M $ is strictly decreasing on $ (0,\infty) $. 
\end{lemma}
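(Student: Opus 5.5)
The plan is to prove that $\log M$ has a strictly negative derivative on $(0,\infty)$. Taking logarithms in \Cref{eqn:M}, which is legitimate since $R'>0$ and $R>0$ by \Cref{lem:R}, and using $u' = 1-u^2$ from \Cref{eqn:u'}, we get
\begin{align}
    \frac{M'(t)}{M(t)} &= \frac{R''(t)}{R'(t)} + \frac{1}{2}\paren{\frac{1}{t} - \frac{1-u(t)^2}{u(t)} - \frac{R'(t)}{R(t)}} . \notag
\end{align}
Differentiating the formula in \Cref{eqn:R'} gives the explicit expression
\begin{align}
    R''(t) = (1-u^2)\paren{3 - u^{-2}} + t^{-2} . \notag
\end{align}
Multiplying through by $2R R' > 0$, the claim $M'<0$ becomes equivalent to $D(t)>0$, where
\begin{align}
    D(t) &\coloneqq R'(t)^2 - R(t)\, K(t) , &
    K(t) &\coloneqq 2R''(t) + R'(t)\paren{\frac{1}{t} - \frac{1-u(t)^2}{u(t)}} . \notag
\end{align}
Note that $K$ is an explicit elementary function of $t$ and $u=\tanh t$. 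The only non-algebraic ingredient in $D$ is $R$ itself, and it enters linearly.

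\textbf{Treating $R$.} Wherever $K(t)\le 0$ there is nothing to prove. On the set where $K>0$, it suffices to find an explicit elementary upper bound $\bar R(t)\ge R(t)$ with $R'^2 \ge \bar R K$. I would obtain $\bar R$ from $R(t)=\int_0^t R'(s)\,\dd s$ combined with elementary bounds on $R'$: for instance $\tanh s \ge s - s^3/3$ and $1/\tanh s \le 1/s + s/3$ for small $s$, and $R'(s)\le 3+1/\tanh s - 1/s \le 4$ for large $s$. Alternatively, one can compare directly with $4\log\cosh t - \log(t/\tanh t) \le 4t - \log t + \log(\coth t)$ for large $t$.

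\textbf{Two regimes.} I would split the range of $t$.
\begin{itemize}
    \item[(a)] For small $t$, Taylor expansion gives $R'\approx 10t/3$ and $R\approx 5t^2/3$. At leading order $D$ is then of order $t^2$. To make this rigorous I would use truncated alternating series for $\tanh$ and $\coth$, with explicit remainders, to show that $D(t)/t^2$ is bounded below by a positive constant on some interval $(0,t_0]$.
    \item[(b)] For large $t$, we have $u = 1 - O(e^{-2t})$, $R'\to 3$, $R\approx 4t-\log t$, and $K \approx 2/t^2 + 3/t$. Hence $RK\approx 12 - O(\log t / t)$ while $R'^2\approx 9$. This crude comparison does not obviously give positivity, so the exact asymptotics must be tracked carefully.
\end{itemize}

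\textbf{Main obstacle.} The hard step is exactly this quantitative sign check, and in particular regime (b), where the leading terms nearly cancel. If $D$ as written is too delicate there, my first fallback is to avoid log-differentiation entirely. Instead I would write $M^2 = R'^2\, t/(uR)$ and show that $N \coloneqq R'^2 t - \theta\, uR$ changes sign at most once for a suitable level $\theta$, equivalently that $M^2$ crosses each level at most once. Combined with the endpoint limits $M(0^+)=2\sqrt{5/3}$ and $M(\infty)=3/2$, this would yield monotonicity. For the intermediate range between the small-$t$ and large-$t$ regimes, I expect that a finite interval-arithmetic check will be needed. This check would use Lipschitz bounds on $D$ derived from the explicit formulas for $R'$ and $R''$.
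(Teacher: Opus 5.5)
\textbf{Verdict: genuine gap.} Your reduction is correct: your $K$ is exactly the paper's $B$, and $M'<0$ is equivalent to $R'^2-RK>0$. The paper reaches the same quantity via $V=M^2$. But the proposal stops where the real work begins. The asymptotics you offer for the sign check are wrong in both regimes, in ways that break the plan.

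\textbf{Small $t$.} One has $R'=\frac{10}{3}t-\frac{46}{45}t^3+O(t^5)$, $R=\frac53t^2-\frac{23}{90}t^4+O(t^6)$ and $K=\frac{20}{3}-\frac{176}{45}t^2+O(t^4)$. So the $t^2$ terms of $R'^2$ and $RK$ cancel, and $R'^2-RK=\frac{38}{27}t^4+O(t^6)$. Hence your $D(t)/t^2\to0$, and step (a) cannot succeed as stated. Any upper bound $\bar R$ would also have to match $R$ through its $t^4$ term.

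\textbf{Large $t$.} Here $R'=3u+1/u-1/t\to4$, not $3$; $K=4/t+1/t^2+O(e^{-2t})$; and $M(\infty)=2$, not $3/2$. Your values ($RK\to12$, $R'^2\to9$) would make $D$ negative at infinity, i.e.\ they would refute the lemma, not merely fail to prove it. Correctly, both quantities tend to $16$ and
\[
R'^2-RK=\frac{4\log t+16\log2-12}{t}+O\!\left(\frac{\log t}{t^2}\right),
\]
with $16\log2-12\approx-0.91$. So the margin is only of order $\log t/t$, and it depends on the constant in $R=4t-\log t-4\log2+o(1)$. Your bound $\bar R=4t-\log t+\log\coth t$ discards the $-4\log2$. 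It therefore gives positivity only for $t$ beyond roughly $e^3\approx20$, leaving a long middle range to unspecified interval arithmetic.

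\textbf{Fallback.} The single-crossing fallback does not help. The quantity $R'^2t-\theta uR$ still contains $R$. Single crossing for every $\theta$ is just a restatement of strict monotonicity, and for a single $\theta$ it does not give the lemma.

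\textbf{The missing idea.} You noticed that $R$ enters linearly; the paper exploits this by dividing by $K$ and differentiating.
\begin{itemize}
\item It first shows $K>0$ everywhere. With $r=t/u>1$ and $z=2t/\sinh(2t)\in(0,1)$, one has $t^2K=1+4r-2z-3z^2>(1-z)(5+3z)>0$.
\item It then studies $R'^2/K-R$. This tends to $0$ as $t\downarrow0$, and its derivative $\frac{R'}{K^2}\,(2R''K-R'K'-K^2)$ contains no $R$.
\item The bracket equals $-\bigl(t(3u^2+1)-u\bigr)P(t,u)/(t^4u^4)$, and $t(3u^2+1)-u>0$. So positivity of the derivative reduces to $P<0$.
\item One has $P/u^3=rz(12z-4)-3z^3-3z^2-z-1$. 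This is trivially negative for $z\le1/3$. For $z>1/3$ it follows from the elementary inequality $2rz<1+z$, which is a power-series check that $\sinh^2t\cosh t+t\sinh t-2t^2\cosh t>0$.
\end{itemize}
This turns a transcendental inequality, whose margin vanishes at both ends, into an $R$-free algebraic one handled uniformly on $(0,\infty)$ with no numerics. Without this step, or a fully specified computer-assisted argument built on correct asymptotics, your proposal does not establish the lemma.
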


\begin{proof}
It is equivalent to proving that 
\begin{align}
    V(t) \coloneqq M(t)^2 &= \frac{t R'(t)^2}{u(t) R(t)} \label{eqn:V} 
\end{align}
is strictly decreasing. 
Our strategy is to show that $ V' $ is strictly negative on $ (0,\infty) $. 
The proof is divided into several steps. 

\paragraph{Explicit expression of $ V' $.}
Taking the derivative of the logarithm of both sides of the defining equation \Cref{eqn:V} for $V$, we obtain 
\begin{align}
    \frac{V'(t)}{V(t)} &= \frac{1}{t} + 2\frac{R''(t)}{R'(t)} - \frac{u'(t)}{u(t)} - \frac{R'(t)}{R(t)} . \label{eqn:V'} 
\end{align}
Defining 
\begin{align}
    B(t) &\coloneqq 2 R''(t) + R'(t) \paren{\frac{1}{t} - \frac{u'(t)}{u(t)}} , \label{eqn:B} 
\end{align}
we can write the first three terms as 
\begin{align}
    \frac{1}{t} + 2\frac{R''(t)}{R'(t)} - \frac{u'(t)}{u(t)}
    &= \frac{1}{R'(t)} \paren{ 2R''(t) + R'(t) \paren{ \frac{1}{t} - \frac{u'(t)}{u(t)} } }
    = \frac{B(t)}{R'(t)} . \notag 
\end{align}
Plugging this back to \Cref{eqn:V'} and using \Cref{eqn:V}, we have 
\begin{align}
    V'(t) &= \frac{t R'(t)^2}{u(t) R(t)} \paren{ \frac{B(t)}{R'(t)} - \frac{R'(t)}{R(t)} }
    = \frac{t R'(t)}{u(t) R(t)^2} \paren{ B(t) R(t) - R'(t)^2 } . \notag 
\end{align}
Since $ t,R'(t),u(t) $ are all positive, the sign of $ V'(t) $ is the sign of 
\begin{align}
    B(t)R(t) - R'(t)^2 \label{eqn:BR-R'2}
\end{align}
which we will prove to be negative. 
To this end, we first provide an explicit expression of $ B $. 

\paragraph{Explicit expressions of $B$ and $ B' $.}
Let us first compute $ R''(t) $. 
Taking the derivative of \Cref{eqn:R'} and using \Cref{eqn:u'}, we have 
\begin{align}
    R''(t) &= 3u'(t) - \frac{u'(t)}{u(t)^2} + \frac{1}{t^2}
    = 3(1 - u(t)^2) - \frac{1 - u(t)^2}{u(t)^2} + \frac{1}{t^2}
    = 4 - 3 u(t)^2 - \frac{1}{u(t)^2} + \frac{1}{t^2} . \label{eqn:R''} 
\end{align}
Plugging this and \Cref{eqn:R'} into $B$ in \Cref{eqn:B}, we get 
\begin{align}
    B(t) &= 2 \paren{ 4 - 3 u(t)^2 - \frac{1}{u(t)^2} + \frac{1}{t^2} }
    + \paren{ 3u(t) + \frac{1}{u(t)} - \frac{1}{t} } \paren{ \frac{1}{t} - \frac{1 - u(t)^2}{u(t)} } \notag \\
    &= 6 - 3u(t)^2 - \frac{3}{u(t)^2} + \frac{2u(t)}{t} + \frac{2}{t u(t)} + \frac{1}{t^2} , \label{eqn:expr_B} 
\end{align}
where the last equality is by expanding all products and performing elementary simplifications. 
Differentiating $B$ gives
\begin{align}
    B'(t) &= -6u(t)u'(t) + \frac{6u'(t)}{u(t)^3} + \frac{2u'(t)}{t} - \frac{2u(t)}{t^2} - \frac{2u'(t)}{tu(t)^2} - \frac{2}{t^2u(t)} - \frac{2}{t^3} \notag \\
    &= -6u(t)(1 - u(t)^2) + \frac{6(1 - u(t)^2)}{u(t)^3} + \frac{2(1 - u(t)^2)}{t} - \frac{2u(t)}{t^2} - \frac{2(1 - u(t)^2)}{tu(t)^2} - \frac{2}{t^2u(t)} - \frac{2}{t^3} , \label{eqn:B'}
\end{align}
where the second line uses \Cref{eqn:u'}. 

\paragraph{Nonnegativity of $B$.}
Next, we show $ B(t)>0 $ for all $t>0$. 
Introduce additional variables
\begin{align}
&&
    r(t) &\coloneqq \frac{t}{u(t)} , & 
    z(t) &\coloneqq r(t) (1 - u(t)^2) = \frac{t(1 - u(t)^2)}{u(t)} . &
& \label{eqn:rz} 
\end{align}
Since $ u(t) < t $, we have 
\begin{align}
    r(t) &> 1 . \label{eqn:r>1}
\end{align}
Also, by definition, 
\begin{align}
    z(t) &= \frac{t (1 - \tanh(t)^2)}{\tanh(t)}
    = \frac{t \sech(t)^2}{\tanh(t)}
    = \frac{t / \cosh(t)^2}{\sinh(t) / \cosh(t)}
    = \frac{t}{\sinh(t) \cosh(t)}
    = \frac{2t}{\sinh(2t)} . \label{eqn:expr_z} 
\end{align}
Since $ \sinh(x) > x $ for all $ x>0 $, we have 
\begin{align}
    z(t) \in (0,1) . \label{eqn:z<1}
\end{align}

Using \Cref{eqn:expr_B}, let us compute $ t^2 B(t) $: 
\begin{align}
    t^2 B(t) &= 6t^2 - 3u(t)^2t^2 - \frac{3t^2}{u(t)^2} + 2u(t)t + \frac{2t}{u(t)} + 1 \notag \\
    &= 6r(t)^2u(t)^2 - 3u(t)^4r(t)^2 - 3r(t)^2 + 2u(t)^2r(t) + 2r(t) + 1 , \notag 
\end{align}
where in obtaining the second line, we use $ r(t) $ to eliminate $ t $ factors. 
On the other hand, we have 
\begin{align}
    1 + 4 r(t) - 2z(t) - 3 z(t)^2
    &= 1 + 4r(t) - 2r(t) (1 - u(t)^2)
    -3r(t)^2 (1 - u(t)^2)^2 \notag \\
    &= 1 + 2r(t) + 2r(t) u(t)^2 - 3r(t)^2 + 6r(t)^2u(t)^2 - 3r(t)^2u(t)^4 , \notag 
\end{align}
which coincides with the expression of $ t^2 B(t) $ above. 
So 
\begin{align}
    t^2B(t) &= 1 + 4 r(t) - 2z(t) - 3 z(t)^2
    > 5 - 2z(t) - 3 z(t)^2
    % = (1 - z(t)) (5 + 3z(t)) 
    , \notag 
\end{align}
where the inequality is by \Cref{eqn:r>1}. 
By \Cref{eqn:z<1}, the quadratic in $z(t)$ on the RHS is positive. 
This proves 
\begin{align}
    B(t) &> 0 \label{eqn:B>0}
\end{align}
for all $t>0$. 

At this point, recalling the goal of proving negativity of \Cref{eqn:BR-R'2}, we define
\begin{align}
    D(t) &\coloneqq \frac{R'(t)^2}{B(t)} - R(t) . \label{eqn:D}
\end{align}
Since $ B(t)>0 $ just shown, it suffices to show that $ D(t) > 0 $ for all $t>0$. 

\paragraph{Limit of $D$ at $0$.}
We claim that
\begin{align}
    \lim_{t\downarrow0} D(t) &= 0 . \label{eqn:limD}
\end{align}
Indeed, the standard Taylor expansion around $t=0$ yields $ u(t) = \tanh(t) = t - t^3/3 + O(t^5) $. 
Using this with the explicit expressions of $ R(t),R'(t),B(t) $ in \Cref{eqn:R,eqn:R',eqn:expr_B}, we conclude \Cref{eqn:limD}. 

\paragraph{Monotonicity of $D$.}
We now prove $ D'(t) > 0 $ for all $ t>0 $. 
Differentiating $D$ gives
\begin{align}
    D'(t) &= \frac{R'(t)}{B(t)^2} ( 2R''(t) B(t) - R'(t) B'(t) - B(t)^2 ) . \notag 
\end{align}
Since both $ B $ and $ R' $ are positive on $ (0,\infty) $ (see \Cref{eqn:B>0} and \Cref{lem:R}), it remains to show 
\begin{align}
    2R''(t) B(t) - R'(t) B'(t) - B(t)^2 &> 0 . \label{eqn:D'_TODO}
\end{align}
Substituting \Cref{eqn:R'',eqn:R',eqn:B,eqn:B'} in the LHS above for $ R'(t),B(t),R''(t),B'(t) $, we obtain 
\begin{align}
    2R''(t) B(t) - R'(t) B'(t) - B(t)^2
    &= \frac{1}{t^4 u(t)^4} \big[ 
        - 9t^4u(t)^8 
        - 12t^4u(t)^6 
        + 42t^4u(t)^4 
        - 12t^4u(t)^2
        \notag \\ &\qquad\qquad\quad 
        - 9t^4 
        + 12t^3u(t)^7 
        - 24t^3u(t)^5
        - 4t^3u(t)^3
        + 16t^3u(t)
        \notag \\ &\qquad\qquad\quad 
        - 6t^2u(t)^6
        + 12t^2u(t)^4
        - 6t^2u(t)^2
        + 4tu(t)^5
        - u(t)^4
    \big] . \notag 
\end{align}
The terms in the brackets can be factorized as $ -[t (3u(t)^2 + 1) - u(t)] \cdot P(t, u(t)) $ where the bivariate polynomial $P$ is given by 
\begin{align}
    P(t,u) &\coloneqq 
    3t^3u^6
    + 3t^3u^4
    - 15t^3u^2
    + 9t^3
    - 3t^2u^5
    + 10t^2u^3
    - 7t^2u 
    + tu^4
    - tu^2
    - u^3 . 
    \notag 
\end{align}
Therefore, 
\begin{align}
    2R''(t) B(t) - R'(t) B'(t) - B(t)^2
    &= - \frac{t (3u(t)^2 + 1) - u(t)}{t^4 u(t)^4} P(t,u(t)) . \label{eqn:P_TODO} 
\end{align}
Since $ t (3u(t)^2 + 1) - u(t) = (t - u(t)) + 3tu(t)^2 > 0 $ by $ t>u(t) $, it is enough to prove that $ P(t,u(t)) < 0 $ for all $t>0$. 

\paragraph{Negativity of $P$.}
We again work with the auxiliary variables $ r(t), z(t) $ in \Cref{eqn:rz}. 
Let us consider $ P/u^3 $
 % $ P(t,u(t))/u(t)^3 $ 
and use $ r(t) $ to eliminate all $ t $ factors: 
\begin{align}
    \frac{P(t,u(t))}{u(t)^3}
    &= 3t^3u(t)^3
    + 3t^3u(t)
    - \frac{15t^3}{u(t)}
    + \frac{9t^3}{u(t)^3}
    - 3t^2u(t)^2
    + 10t^2
    - \frac{7t^2}{u(t)^2} 
    + tu(t)
    - \frac{t}{u(t)}
    - 1 \notag \\
    &= 3r(t)^3u(t)^6
    + 3r(t)^3u(t)^4
    - 15r(t)^3u(t)^2
    + 9r(t)^3
    \label{eqn:l1} \\ &\quad
    - 3r(t)^2u(t)^4
    + 10r(t)^2u(t)^2
    - 7r(t)^2
    \label{eqn:l2} \\ &\quad
    + r(t)u(t)^2
    - r(t)
    - 1 . \label{eqn:l3}
\end{align}
Further introduce $ q(t) \coloneqq 1 - u(t)^2 $. 
Then trivially, 
\begin{align}
&&
    z(t) &= r(t) q(t) , &
    q(t) &= z(t) / r(t) , & 
    u(t)^2 &= 1 - q(t) . & 
& \notag
\end{align}
We use these auxiliary variables to simplify the RHS of $ P/u^3 $. 
The first line \Cref{eqn:l1} can be written as 
\begin{align}
    3r(t)^3u(t)^6
    + 3r(t)^3u(t)^4
    - 15r(t)^3u(t)^2
    + 9r(t)^3
    &= 3r(t)^3 \brack{
        (1 - q(t))^3
        + (1 - q(t))^2
        - 5(1 - q(t))
        + 3
    } \notag \\
    &= 3r(t)^3 \brack{4q(t)^2 - q(t)^3}
    = 12 r(t) z(t)^2 - 3 z(t)^3 . \notag 
\end{align}
Similarly, the second line \Cref{eqn:l2} becomes
\begin{align}
    - 3r(t)^2u(t)^4
    + 10r(t)^2u(t)^2
    - 7r(t)^2
    &= r(t)^2 \brack{
        - 3(1 - q(t))^2
        + 10(1 - q(t))
        - 7
    } \notag \\
    &= r(t)^2 \brack{ -4q(t) - 3q(t)^2 }
    = -4r(t)z(t) - 3z(t)^2 . \notag 
\end{align}
Finally, for the third line \Cref{eqn:l3}, we have 
\begin{align}
    r(t) u(t)^2 - r(t) - 1
    &= r(t) \brack{(1 - q(t)) - 1} - 1
    = -r(t)q(t) - 1 = -z(t) - 1 . \notag 
\end{align}
Summing up the preceding three displays, 
\begin{align}
    \frac{P(t,u(t))}{u(t)^3}
    &= r(t)z(t) (12z(t)-4)-3z(t)^3-3z(t)^2-z(t)-1 . \label{eqn:P_u3}
\end{align}

\paragraph{An elementary inequality.}
Before proceeding, let us establish an elementary inequality 
\begin{align}
    2r(t)z(t) &< 1 + z(t) . \label{eqn:rz_TODO}
\end{align}
By \Cref{eqn:rz}, $ r(t)z(t) $ is explicitly given by 
\begin{align}
    r(t)z(t)
    &= \frac{t}{u(t)} \cdot \frac{t(1 - u(t)^2)}{u(t)}
    = t^2 \cdot \frac{1-\tanh(t)^2}{\tanh(t)^2}
    = t^2 \cdot \frac{\sech(t)^2}{\tanh(t)^2}
    = \frac{t^2}{\sinh(t)^2} . \notag 
\end{align}
Also, from \Cref{eqn:expr_z}, 
\begin{align}
    z(t) &= \frac{t}{\sinh(t) \cosh(t)} . \notag 
\end{align}
So \Cref{eqn:rz_TODO} is equivalent to 
\begin{align}
    \frac{2t^2}{\sinh(t)^2} &< 1 + \frac{t}{\sinh(t) \cosh(t)} , \notag 
\end{align}
or 
\begin{align}
    2t^2 \cosh(t) &< \sinh(t)^2 \cosh(t) + t \sinh(t) . \notag 
\end{align}
Defining 
\begin{align}
    E(t) &\coloneqq \sinh(t)^2 \cosh(t) + t \sinh(t) - 2t^2 \cosh(t) , \notag 
\end{align}
we will show $ E(t) > 0 $ for all $ t>0 $. 
Using the identity
\begin{align}
    \sinh(t)^2 \cosh(t) &= \frac{\cosh(3t) - \cosh(t)}{4} , \notag 
\end{align}
we write $ E(t) $ as 
\begin{align}
    E(t) &= \frac{\cosh(3t) - \cosh(t)}{4} + t \sinh(t) - 2t^2 \cosh(t) . \notag 
\end{align}
Expanding all three terms into power series at $0$ (which converge on the entire real line), we have 
\begin{align}
    E(t) &= \sum_{n=0}^\infty \frac{(3^{2n} - 1) t^{2n}}{4 \cdot (2n)!} 
    + \sum_{n=0}^\infty \frac{t^{2n+2}}{(2n+1)!}
    - \sum_{n=0}^\infty\frac{2 \cdot t^{2n+2}}{(2n)!} \notag \\
    &= \sum_{n=1}^\infty \frac{(9^{n} - 1) \cdot t^{2n}}{4 \cdot (2n)!} 
    + \sum_{n=1}^\infty \frac{t^{2n}}{(2n-1)!}
    - \sum_{n=1}^\infty\frac{2 \cdot t^{2n}}{(2n-2)!} \notag \\
    &= \sum_{n = 3}^\infty \frac{9^n - 32n^2 + 24n - 1}{4 \cdot(2n)!} t^{2n} , \notag 
\end{align}
where the last equality follows since the coefficient in front of $ t^{2n} $ vanishes for $ n\in\{1,2\} $. 
Moreover, it is easy to verify that this coefficient is positive for all $n\ge3$. 
This allows us to conclude $ E(t) > 0 $ and therefore the inequality \Cref{eqn:rz_TODO}. 

\paragraph{Concluding negativity of $P$.}
Recall that our mission is to show negativity of $ P(t,u(t)) $. 
Since $ u(t)>0 $, we only need to show negativity of the RHS of \Cref{eqn:P_u3}. 

Recalling \Cref{eqn:z<1}, we consider two cases. 
\begin{enumerate}
    \item 
    If $ 0<z(t)\le1/3 $, then by \Cref{eqn:r>1}, $ r(t) z(t) (12z(t)-4) \le 0 $. 
    Thus 
    \begin{align}
        \frac{P(t,u(t))}{u(t)^3} &\le -3z(t)^3 - 3z(t)^2 - z(t) - 1 , \notag 
    \end{align}
    the RHS of which is negative. 

    \item 
    If $ 1/3<z(t)<1 $, using \Cref{eqn:rz_TODO}, we have 
    \begin{align}
        r(t) z(t) (12z(t)-4)
        &\le \frac{1}{2} (1 + z(t)) (12z(t)-4) . \notag 
    \end{align}
    Thus, 
    \begin{align}
        \frac{P(t,u(t))}{u(t)^3} &\le \frac{1}{2} (1 + z(t)) (12z(t)-4) -3z(t)^3 - 3z(t)^2 - z(t) - 1
        = -3z(t)^3 + 3z(t)^2 + 3z(t) - 3 , \notag 
    \end{align}
    the RHS of which is again negative. 
\end{enumerate}
This completes the proof of $ P(t,u(t))<0 $. 
Backtracking to \Cref{eqn:P_TODO}, we therefore establish \Cref{eqn:D'_TODO}, which in turn implies $ D'(t)>0 $. 
Combining this with \Cref{eqn:limD}, we obtain positivity of $D$ which is equivalent to negativity of \Cref{eqn:BR-R'2} and $ V' $. 
By \Cref{eqn:V}, we finally conclude that $M$ is decreasing on $(0,\infty)$. 
\end{proof}

We are in a position to prove \Cref{lem:H}. 

\begin{proof}[Proof of \Cref{lem:H}.]
Recall from \Cref{eqn:H} that
\begin{align}
    H(t) &= \erf\paren{\sqrt{\frac{R(t)}{2}}} - u(t) . \notag 
\end{align}
Let us use chain rule to compute the derivative of $H$: 
\begin{align}
    H'(t) &= \frac{2}{\sqrt{\pi}} e^{-R(t)/2} \cdot \frac{R'(t)}{2 \sqrt{2 R(t)}} - u'(t)
    = \frac{R'(t)}{\sqrt{2\pi R(t)}} e^{-R(t)/2} - (1 - u(t)^2) , \notag 
\end{align}
where the last equality is by \Cref{eqn:u'}. 
By \Cref{eqn:expR}, 
\begin{align}
    e^{-R(t)/2} &= (1 - u(t)^2) \sqrt{t/u(t)} . \notag 
\end{align}
Therefore, 
\begin{align}
    H'(t) &= \frac{R'(t)}{\sqrt{2\pi R(t)}} (1 - u(t)^2) \sqrt{\frac{t}{u(t)}} - (1 - u(t)^2) \notag \\
    &= (1 - u(t)^2) \paren{ \frac{R'(t)}{\sqrt{2\pi R(t)}} \sqrt{\frac{t}{u(t)}} - 1 }
    = (1 - u(t)^2) \paren{\frac{M(t)}{\sqrt{2\pi}} - 1} , \notag 
\end{align}
by the definition of $M$ in \Cref{eqn:M}. 
Since $ u(t)\in(0,1) $, the sign of $H'$ is the same as that of $ M(t) - \sqrt{2\pi} $. 

It is easy to verify that 
\begin{align}
&&
    \lim_{t\downarrow0} M(t) &= 2 \sqrt{5/3} , & 
    \lim_{t\to\infty} M(t) &= 2 . & 
& \notag 
\end{align}
Since $ 2 < \sqrt{2\pi} < 2\sqrt{5/3} $, by \Cref{lem:M} and continuity of $M$, there exists a unique $ \tau>0 $ such that $ M(\tau) = \sqrt{2\pi} $. 
Therefore, $ H' $ is positive on $ (0,\tau) $ and negative on $ (\tau,\infty) $, that is, $H$ increases on $ (0,\tau) $ and decreases on $ (\tau,\infty) $. 

To conclude nonnegativity of $H$, it remains to study the limits of $H$ at $0$ and $\infty$ which can be easily computed: 
\begin{align}
&&
    \lim_{t\downarrow0} H(t) &= 0 , & 
    \lim_{t\to\infty} H(t) &= 0 . & 
& \notag 
\end{align}
This completes the proof of \Cref{lem:H}. 
\end{proof}

\section{Proof of \texorpdfstring{\Cref{thm:highd}}{}}
\label{app:highd}

Let $ u_1, \cdots, u_{d+1} \in \bbR^d $ be such that 
\begin{align}
&&
    \normtwo{u_i}^2 &= d , & 
    \inprod{u_i}{u_j} &= -1 , & 
& \label{eqn:frame1} 
\end{align}
for all $ i\ne j $. 
Such vectors can be obtained by taking the vertices of a regular simplex in $ \bbR^{d+1} $, rigidly embedding them into $ \bbR^d $, and suitably centering and rescaling them. 
The condition \Cref{eqn:frame1} is equivalent to 
\begin{align}
&&
    \sum_{i = 1}^{d+1} u_i &= 0_d , & 
    \frac{1}{d+1} \sum_{i = 1}^{d+1} u_i u_i^\top &= I_d ; &  
& \label{eqn:frame2}
\end{align}
see e.g.\ \cite[Section 2.3]{Fickus_Mixon}. 

Let 
\begin{align}
    X &\sim \frac{1}{d+1} \sum_{j = 1}^{d+1} \delta_{u_j / \sigma_d} , \label{eqn:Xd} 
\end{align}
where $ \sigma_d $ is defined in \Cref{eqn:sigmad}. 
To verify sub-Gaussianity of $ X $, we need the following technical lemma. 

\begin{lemma}
\label{lem:K}
Let $ n \in\bbZ_{\ge2} $ and $s\ge0$. 
Define 
\begin{align}
    K &\coloneqq \brace{ y\in\bbR^n : \inprod{y}{1_n} = 0 , \, \normtwo{y}^2 = n } . \label{eqn:K} 
\end{align}
Then 
\begin{align}
    \max_{y\in K} \frac{1}{n} \sum_{i = 1}^n e^{s y_i} &= \frac{1}{n} e^{s \sqrt{n-1}} + \paren{ 1 - \frac{1}{n} } e^{-s / \sqrt{n-1}} . \label{eqn:Kineq} 
\end{align}
% Moreover, if $ s>0 $, then the maximizers are precisely the permutations of 
% \begin{align}
%     \matrix{ \sqrt{n-1} & -\frac{1}{\sqrt{n-1}} & \cdots & -\frac{1}{\sqrt{n-1}} }^\top \in \bbR^n . \notag 
% \end{align}
\end{lemma}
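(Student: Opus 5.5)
The plan is to maximize $F(y) \coloneqq \frac1n\sum_{i} e^{s y_i}$ over the compact set $K$ by Lagrange multipliers, and then to compare the finitely many candidate critical configurations. The lower bound in \Cref{eqn:Kineq} is immediate. The vector $y^\star = (\sqrt{n-1}, -1/\sqrt{n-1},\dots,-1/\sqrt{n-1})$ has coordinate sum $\sqrt{n-1} - (n-1)/\sqrt{n-1} = 0$ and squared norm $(n-1) + 1 = n$, so $y^\star\in K$. Moreover $F(y^\star)$ equals the right-hand side of \Cref{eqn:Kineq}. For $s=0$ both sides equal $1$, so I assume $s>0$.

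For the upper bound, $K$ is a compact smooth manifold (a sphere of dimension $n-2$), so a maximizer exists. At the maximizer, the constraint gradients $1_n$ and $y$ are linearly independent, because $y\ne0$ and $y\perp 1_n$. Hence there exist $\alpha,\gamma$ with $s e^{s y_i} = \alpha + 2\gamma y_i$ for all $i$. The function $x\mapsto s e^{sx} - 2\gamma x - \alpha$ is strictly convex, so it has at most two zeros. Therefore the coordinates of a maximizer take at most two distinct values. Exactly one value is impossible, since it would force $y=0\notin K$. So the coordinates take two values $a>0>b$, with $k$ coordinates equal to $a$ and $n-k$ equal to $b$, where $1\le k\le n-1$. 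The constraints $ka+(n-k)b=0$ and $ka^2+(n-k)b^2=n$ then give
\[
a=\sqrt{(n-k)/k}, \qquad b=-\sqrt{k/(n-k)}.
\]
Writing $p=k/n$, the candidate value is
\[
g(p) \coloneqq p\, e^{s\sqrt{(1-p)/p}} + (1-p)\, e^{-s\sqrt{p/(1-p)}},
\]
which is the MGF at $s$ of the standardized two-point law taking value $\sqrt{(1-p)/p}$ with probability $p$ and $-\sqrt{p/(1-p)}$ with probability $1-p$.

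It remains to show that $g(k/n)$, for $k=1,\dots,n-1$, is maximized at $k=1$. This comparison is the main obstacle. The first idea is to show that $g$ is decreasing on $(0,1)$ for each fixed $s>0$. A power series approach is natural here. Expand $g(p)=\sum_m \frac{s^m}{m!}\kappa_m(p)$, where $\kappa_m(p) = p\,a^m + (1-p)\,b^m$ is the $m$-th moment of the standardized two-point law. This gives $\kappa_0=1$, $\kappa_1=0$, $\kappa_2=1$, and $\kappa_m(p)= p(1-p)\bigl[(1-p)^{m-1}-(-p)^{m-1}\bigr]/(p(1-p))^{m/2}$. It would suffice to show that each $\kappa_m$ with $m\ge3$ is nonincreasing in $p$.

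For $m=3$ this holds, since $\kappa_3=(1-2p)/\sqrt{p(1-p)}$ is decreasing. For general $m$, I would substitute $p=1/(1+e^{2\theta})$ to make the structure hyperbolic, which gives $\kappa_m$ as a sum of exponentials in $\theta$. If monotonicity of every moment fails, the fallback is a direct derivative computation of $g$ in $p$. Equivalently, one can parametrize by $a=\sqrt{(1-p)/p}$ and show $\partial_a g>0$. This reduces to an elementary inequality of the form $e^{s(a+1/a)} \ge 1 + s(a+1/a) + \dots$, which can be handled by convexity of the exponential.

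Finally, since $k/n$ ranges over $[1/n,\,1-1/n]$, monotonicity gives $\max_k g(k/n)=g(1/n)$. This equals the right-hand side of \Cref{eqn:Kineq}, which concludes the proof.
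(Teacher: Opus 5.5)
Your reduction to two-valued maximizers and to comparing $g(k/n)$, $1\le k\le n-1$, is correct and coincides with the paper's. However, the step you flag as the main obstacle is never proved, and the route you lead with fails as stated. Replacing $p$ by $1-p$ negates the standardized two-point law, so every even moment is symmetric about $p=1/2$. For instance, $\kappa_4(p)=\frac{1}{p(1-p)}-3$ increases on $(1/2,1)$. Thus your sufficient condition ``each $\kappa_m$ with $m\ge3$ is nonincreasing in $p$'' is false for all even $m\ge4$. The monotonicity of $g$ on $(0,1)$, which is true, cannot be obtained term by term.

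Your fallback is the paper's route, but it is only announced. You do not compute $\partial_a g$, and the resulting inequality is not of the Taylor-lower-bound type $e^r\ge 1+r+\cdots$ that you predict. Set $a=\sqrt{(1-p)/p}$, $g(p)=F_s(a)\coloneqq\frac{e^{sa}+a^2e^{-s/a}}{1+a^2}$ and $r\coloneqq s(a+1/a)$. The paper computes
\[
F_s'(a)=\frac{a\,e^{-s/a}}{(1+a^2)^2}\bigl[(r-2)e^r+r+2\bigr].
\]
So what is needed is $h(r)\coloneqq(r-2)e^r+r+2\ge0$ for $r\ge0$. This is equivalent to $\tanh(r/2)\le r/2$, and for $r<2$ it is the \emph{upper} bound $e^r\le\frac{2+r}{2-r}$, not a lower bound on $e^r$. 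It follows from $h(0)=h'(0)=0$ and $h''(r)=re^r\ge0$, or from $h(r)=\sum_{m\ge3}\frac{(m-2)r^m}{m!}$.

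Alternatively, your moment idea can be rescued with a corrected statement, and your hyperbolic substitution is what makes it work. With $p=1/(1+e^{2\theta})$ one has $a=e^{\theta}$, $b=-e^{-\theta}$ and
\[
\kappa_m=\frac{e^{(m-1)\theta}+(-1)^m e^{-(m-1)\theta}}{2\cosh\theta}.
\]
For odd $m$ this is $\sinh((m-1)\theta)/\cosh\theta$, which is increasing in $\theta$. For even $m$ it is $\cosh((m-1)\theta)/\cosh\theta$, which is even in $\theta$ and increasing in $\abs{\theta}$. The admissible points $p=k/n$ correspond to $\theta_k=\frac12\log\frac{n-k}{k}$, and $\abs{\theta_k}\le\theta_1$ for $1\le k\le n-1$. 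Hence $\kappa_m(k/n)\le\kappa_m(1/n)$ for every $m$, and summing against the weights $s^m/m!\ge0$ gives $g(k/n)\le g(1/n)$. This only compares grid points rather than proving $g$ monotone, and it avoids the derivative computation. Either way, one of these two arguments has to be supplied for the proof to be complete.
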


\begin{proof}
Since $K$ is compact and $ y \mapsto \frac{1}{n} \sum_{i = 1}^n e^{s y_i} $ is continuous, maximizers exist. 
The case of $ s=0 $ is trivial since both sides of \Cref{eqn:Kineq} are equal to $1$. 
Assume $ s>0 $ in the rest of the proof. 
Using the method of Lagrange multipliers, we have that any maximizer $y$ must satisfy 
\begin{align}
    s e^{s y_i} &= \alpha + 2 \beta y_i , \qquad i\in[n] \label{eqn:lag}
\end{align}
for some Lagrange multipliers $ \alpha, \beta $ corresponding to the first and second constraints in $K$, respectively. 
Since $ y_0 \mapsto s e^{s y_0} - \alpha - 2 \beta y_0 $ has second derivative $ s^3 e^{s y_0} > 0 $ and is therefore strictly convex, \Cref{eqn:lag} can have at most two solutions, i.e., each $ y_i $ can take at most two values. 
Moreover, $ y_1, \cdots, y_n $ cannot be all equal, otherwise the constraint $ \inprod{y}{1_n} = 0 $ forces $ y = 0_n $, violating the constraint $ \normtwo{y}^2 = n $. 
Since reordering the coordinates of $y$ does not alter the objective value, without loss of generality, we write a maximizer $y$ as 
\begin{align}
    y &= [\underbrace{a, \cdots, a}_{k\textnormal{ times}}, \underbrace{b, \cdots, b}_{n-k\textnormal{ times}}]^\top \notag 
\end{align}
for some $ 1\le k\le n-1 $ and $ a>b $. 
The constraints in $K$ become
\begin{align}
&&
    ka + (n-k)b &= 0 , & 
    ka^2 + (n-k)b^2 &= n , & 
& \notag 
\end{align}
whose unique solution (satisfying $a>b$) is 
\begin{align}
&&
    a &= \sqrt{\frac{n-k}{k}} , & 
    b &= -\sqrt{\frac{k}{n-k}} . & 
& \notag 
\end{align}
Hence the maximal value of the objective function takes the form of 
\begin{align}
    \frac{k}{n} \exp\paren{ s\sqrt{\frac{n-k}{k}} } + \frac{n-k}{n} \exp\paren{ -s\sqrt{\frac{k}{n-k}} } , \label{eqn:obj}
\end{align}
for some $ 1\le k\le n-1 $. 
We will prove that this is further maximized by $k=1$. 
To this end, let us rewrite \Cref{eqn:obj} in terms of $a$: 
\begin{align}
    F_s(a) &\coloneqq \frac{e^{sa} + a^2 e^{-s/a}}{1 + a^2} . \notag 
\end{align}
It suffices to show that for any $s\ge0$, $ F_s $ is nondecreasing on $ [0,\infty) $. 
Differentiating $ F_s $, we get 
\begin{align}
    F_s'(a) &= \frac{a}{(1+a^2)^2} \brace{ e^{sa} \brack{ s(1/a+a) - 2 } + e^{-s/a} \brack{ s(1/a+a) + 2 } } . \notag 
\end{align}
To determine the sign of $ F_s' $, we only need to determine the sign of the term in braces. 
Under the change of variable $ r \coloneqq s (1/a + a) $, the term in braces becomes
\begin{align}
    e^{sa} (r - 2) + e^{-s/a} (r + 2)
    = e^{-s/a} \brack{ e^r (r-2) + r + 2 } , \label{eqn:er} 
\end{align}
where the equality follows by noting that $ e^{sa} = e^{-s/a} e^r $. 
Now it is easy to verify that $ g(r) \coloneqq e^r (r-2) + r + 2 $ on the RHS of \Cref{eqn:er} satisfies 
\begin{align}
&& 
    g(0) &= 0 ; &
    g'(0) &= 0 ; &
    g''(r) &\ge 0 , \textnormal{ for } r\ge0 . & 
& \notag 
\end{align}
This implies that $ g(r) \ge 0 $ for all $ r\ge0 $, which in turn implies that $ F_s'(a) \ge 0 $ for all $a\ge0$, i.e., $ F_s $ is minimized at $ 0 $. 
Therefore, \Cref{eqn:obj} is maximized at $ k=1 $. 
This completes the proof. 
\end{proof}

Equipped with \Cref{lem:K}, we verify the $ 1 $-sub-Gaussianity of $X$ in the next lemma. 

\begin{lemma}
Let $ d\in\bbZ_{\ge2} $. 
Consider $ X $ defined in \Cref{eqn:Xd}. 
Then $ X \in \cG(d) $. 
\end{lemma}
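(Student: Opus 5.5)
We need to show that for every $v\in\bbS^{d-1}$ and $\lambda\in\bbR$,
\begin{align}
    \frac{1}{d+1}\sum_{j=1}^{d+1} e^{\lambda \inprod{v}{u_j}/\sigma_d} \le e^{\lambda^2/2} . \notag
\end{align}
Since replacing $v$ by $-v$ absorbs the sign of $\lambda$, we may take $\lambda\ge0$. The plan is to recast the vector $y\coloneqq(\inprod{v}{u_j})_{j=1}^{d+1}\in\bbR^{d+1}$ so that \Cref{lem:K} applies, and then reduce everything to a one-variable inequality.

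\textbf{Step 1: $y$ lies in $K$.} By \Cref{eqn:frame2}, $\inprod{y}{1_{d+1}} = \inprod{v}{\sum_j u_j} = 0$. Also
\begin{align}
    \normtwo{y}^2 = v^\top\Big(\sum_j u_ju_j^\top\Big)v = (d+1)\normtwo{v}^2 = d+1 .\notag
\end{align}
Hence $y\in K$ with $n=d+1$. Applying \Cref{lem:K} with $s=\lambda/\sigma_d$ gives
\begin{align}
    M_{\inprod{v}{X}}(\lambda) \le \frac{1}{d+1} e^{s\sqrt d} + \frac{d}{d+1} e^{-s/\sqrt d} .\notag
\end{align}
The right-hand side is the MGF of the two-point law $p\delta_{(1-p)\nu}+(1-p)\delta_{-p\nu}$ with $p=1/(d+1)$ and $(1-p)\nu = \sqrt d/\sigma_d$. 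This gives $\nu = (d+1)/(\sqrt d\,\sigma_d)$, and one checks $p\nu = 1/(\sqrt d\,\sigma_d)$.

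\textbf{Step 2: the sharp two-point sub-Gaussian criterion.} It remains to show this two-point variable $W$ is $1$-sub-Gaussian. In the proof of \Cref{thm:cmgf}, evaluating the MGF at $\lambda = 2\beta/\nu$ showed $\nu^2\le 2\beta/(1-2p)$ is necessary. The natural choice of $\sigma_d$ is exactly the one making this an equality. With $\beta=\log d$ and $1-2p=(d-1)/(d+1)$,
\begin{align}
    \frac{2\beta}{1-2p} = \frac{2(d+1)\log d}{d-1},\qquad \nu^2 = \frac{(d+1)^2}{d\sigma_d^2} = \frac{(d+1)^2\cdot 2d\log d}{d(d^2-1)} = \frac{2(d+1)\log d}{d-1},\notag
\end{align}
so equality indeed holds. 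We therefore need the known sufficiency: the sharp sub-Gaussian variance proxy of a centered Bernoulli-type variable. Writing $W=\nu(B-p)$ with $B\sim\mathrm{Bernoulli}(p)$, this is the Kearns--Saul bound
\begin{align}
    \log\expt{e^{\mu(B-p)}}\le \frac{1-2p}{4\log\frac{1-p}{p}}\,\mu^2 .\notag
\end{align}
With $\mu=\lambda\nu$ and our value of $\nu^2$, this gives exactly $\lambda^2/2$. Combining with Step 1 yields $M_{\inprod{v}{X}}(\lambda)\le e^{\lambda^2/2}$ for all $v$ and $\lambda$, so $X\in\cG(d)$.

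\textbf{Main obstacle.} The hard step is establishing the Kearns--Saul inequality, since the paper only proved the necessity direction. If it cannot simply be cited, I would prove it directly. Set
\begin{align}
    \psi(\mu)=\log\big(pe^{(1-p)\mu}+(1-p)e^{-p\mu}\big),\qquad c=\frac{1-2p}{4\beta},\notag
\end{align}
and show $c\mu^2-\psi(\mu)\ge0$. Both sides vanish at $\mu=0$ together with their first derivatives. The tangency point $\mu=2\beta$ is where the bound is tight, by construction of $c$. I would analyze $\psi'(\mu)/\mu$ and show its maximum is attained consistently with these contact points, that is, argue that $\psi'(\mu)-2c\mu$ has the right sign pattern. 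This one-variable calculus argument is where the real work lies.
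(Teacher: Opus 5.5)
Your proposal is correct and follows the paper's proof essentially step for step. You check that $y=(\langle v,u_j\rangle)_{j}$ lies in $K$ via the frame identities, apply \Cref{lem:K} (handling the sign of $\lambda$ by $v\mapsto -v$ where the paper uses $-y\in K$), recognize the bound as the MGF of a scaled centered Bernoulli$(1/(d+1))$, and conclude with the Kearns--Saul inequality, where $\sigma_d$ makes the constants match exactly; since the paper simply cites Kearns--Saul (proved in Berend--Kontorovich), your fallback calculus sketch, which is only an outline as written, is not needed.
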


\begin{proof}
Fix any $ v\in\bbS^{d-1} $. 
Consider the vector $ y\in\bbR^{d+1} $ with $ y_i = \inprod{u_i}{v} $ for every $ i\in[d+1] $. 
By \Cref{eqn:frame2}, $ \sum_{i = 1}^{d+1} y_i = 0 $, and
\begin{align}
    \sum_{i = 1}^{d+1} y_i^2 &= \sum_{i = 1}^{d+1} \inprod{u_i}{v}^2
    = v^\top \paren{ \sum_{i = 1}^{d+1} u_i u_i^\top } v
    = (d+1) \normtwo{v}^2 = d+1 . \notag 
\end{align}
Therefore, $ y \in K $ for $K$ defined in \Cref{eqn:K} with $n=d+1$. 
Applying \Cref{lem:K}, we have that for all $s\ge0$, 
\begin{align}
    \frac{1}{d+1} \sum_{i = 1}^{d+1} e^{s \inprod{u_i}{v}} &\le \frac{1}{d+1} e^{s\sqrt{d}} + \frac{d}{d+1} e^{-s/\sqrt{d}} . \notag 
\end{align}
For $ s<0 $, applying \Cref{lem:K} to $ \abs{s} = -s > 0 $ and $ -y \in K $ yields
\begin{align}
    \frac{1}{d+1} \sum_{i = 1}^{d+1} e^{s \inprod{u_i}{v}} &\le \frac{1}{d+1} e^{\abs{s}\sqrt{d}} + \frac{d}{d+1} e^{-\abs{s}/\sqrt{d}} . \label{eqn:MGF}
\end{align}
So \Cref{eqn:MGF} holds for any $ s\in\bbR $. 

To further upper bound the RHS of \Cref{eqn:MGF}, we note that it is the moment generating function at $\abs{s}$ of a two-point distribution: 
\begin{align}
    Y &\sim \frac{1}{d+1} \delta_{\sqrt{d}} + \frac{d}{d+1} \delta_{-1/\sqrt{d}} . \notag 
\end{align}
It is easy to check that 
\begin{align}
&&
    Y &\eqqlaw \frac{d+1}{\sqrt{d}} \paren{ B - \frac{1}{d+1} } , & 
    \textnormal{where } B &\sim \bern\paren{ \frac{1}{d+1} } . & 
& \label{eqn:YB} 
\end{align}
By the Kearns--Saul inequality (stated as \cite[Lemma 1]{Kearns_Saul} and proved in \cite[Theorem 3.2]{Berend_Kontorovich}), for $ B \sim \bern(p) $, it holds that  
\begin{align}
    \expt{e^{t (B - p)}} &\le \exp\paren{ \frac{1 - 2p}{4 \log\paren{\frac{1-p}{p}}} \cdot t^2 } , \notag 
\end{align}
for any $ t\in\bbR $. 
In view of \Cref{eqn:YB}, applying this to $ B $ with $ p = 1/(d+1) $ and $ t = \abs{s} (d+1) / \sqrt{d} $, we obtain 
\begin{align}
    \expt{ e^{\abs{s} Y} } &\le \exp\paren{ \frac{1 - 2/(d+1)}{4 \log(d)} \cdot \frac{(d+1)^2}{d} s^2 }
    = \exp\paren{ \frac{\sigma_d^2 s^2}{2} } , \label{eqn:KS} 
\end{align}
where the last step follows by recalling the definition of $ \sigma_d $ in \Cref{eqn:sigmad}. 
Finally, combining \Cref{eqn:MGF,eqn:KS}, we have that for every $ \lambda\in\bbR $ and $ v\in\bbS^{d-1} $, 
\begin{align}
    \expt{e^{\lambda\inprod{X}{v}}}
    &= \frac{1}{d+1} \sum_{i = 1}^{d+1} e^{(\lambda / \sigma_d) \cdot \inprod{u_i}{v}}
    \le \expt{e^{\abs{\lambda / \sigma_d} \cdot Y}}
    \le e^{\lambda^2/2} , \notag 
\end{align}
which completes the proof. 
\end{proof}

Define $ f\colon\bbR^d \to \bbR $ as 
\begin{align}
    f(x) &\coloneqq \max_{i\in[d+1]} \inprod{u_i}{x} . \label{eqn:fd} 
\end{align}
Since $f$ is the pointwise maximum of linear functions, $f$ is convex. 
We compute
\begin{align}
    \expt{ f(X) }
    &= \frac{1}{d+1} \sum_{i = 1}^{d+1} f(u_i / \sigma_d)
    = \frac{1}{d+1} \sum_{i = 1}^{d+1} \max_{j\in[d+1]} \inprod{u_j}{u_i/\sigma_d} \notag \\
    &= \frac{1}{d+1} \sum_{i = 1}^{d+1} \max\brace{ d, -1, \cdots, -1 } / \sigma_d 
    = d/\sigma_d , \label{eqn:fXd} 
\end{align}
where the second line is by \Cref{eqn:frame1}. 

Next, let $ G \sim \cN(0_d,I_d) $. 
We claim that 
\begin{align}
    \paren{ \inprod{G}{u_1} , \cdots , \inprod{G}{u_{d+1}} }
    &\eqqlaw \sqrt{d+1} \, \paren{ Z_1 - Z_0 , \cdots , Z_{d+1} - Z_0 } , \label{eqn:law}
\end{align}
where $ Z_1, \cdots, Z_{d+1} \iid \cN(0,1) $ and $ Z_0 \coloneqq \frac{1}{d+1} \sum_{i = 1}^{d+1} Z_i $. 
To see this, note that both sides of \Cref{eqn:law} are centered Gaussian vectors since they are linear transformations of $G$ and $ (Z_i)_{i=1}^{d+1} $, respectively. 
Then it remains to check their covariance matrices. 
A straightforward calculation shows that they have the same covariance given by $ (d+1) I_{d+1} - 1_{d+1} 1_{d+1}^\top $, thereby justifying \Cref{eqn:law}. 
Using this, for any $c>0$, we compute 
\begin{align}
    \expt{f(c\,G)}
    &= c \expt{ \max_{i\in[d+1]} \inprod{u_i}{G} }
    = c \, \sqrt{d+1} \expt{ \max_{i\in[d+1]} Z_i - Z_0 }
    = c \, \sqrt{d+1} \, m_{d+1} , \label{eqn:fGd} 
\end{align}
since $ Z_0 $ has mean $0$ by definition. 

The constant $ \cmgf(d) $ must satisfy 
\begin{align}
    \expt{ f(X) } &\le \expt{ f(\cmgf(d) \cdot G) }
    = \cmgf(d) \expt{ f(G) } , \notag 
\end{align}
which, by \Cref{eqn:fXd,eqn:fGd}, is equivalent to 
\begin{align}
    \cmgf(d) &\ge \frac{d}{\sqrt{d+1} \, \sigma_d \, m_{d+1}} . \notag 
\end{align}
This completes the proof of \Cref{thm:highd}. 

\section{Proof of \texorpdfstring{\Cref{rk:2d}}{}}
\label{app:sharp2}

% To complement \Cref{thm:2d}, 
We show that in dimension $2$, the sub-Gaussian comparison constant $ \frac{8}{9} \sqrt{\pi\log(2)} $ given by \Cref{thm:highd} is in fact sharp for the specific construction of $X$ in \Cref{eqn:Xd}. 

\begin{lemma}
\label{lem:X}
Consider $X$ defined in \Cref{eqn:Xd} for $d=2$. 
Let $ G \sim \cN(0_2,I_2) $. 
Then $ X \lecx \frac{8}{9} \sqrt{\pi\log(2)} \, G $. 
\end{lemma}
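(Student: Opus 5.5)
The plan is to exhibit $X$ as a conditional expectation of $c\,G$, where $c \coloneqq \frac{8}{9}\sqrt{\pi\log(2)}$. Convex domination then follows from the conditional Jensen inequality, exactly as in the proof of \Cref{lem:Y}. The natural conditioning is the cone partition induced by the convex function $f$ in \Cref{eqn:fd}. For $j\in[3]$ let
\begin{align}
    C_j &\coloneqq \brace{ x\in\bbR^2 : \inprod{u_j}{x} > \inprod{u_i}{x} \textnormal{ for all } i\ne j } . \notag
\end{align}
Up to a Lebesgue-null set, these cones partition $\bbR^2$. Let $J$ be the index of the cone containing $G$, which is almost surely well defined, and set $W \coloneqq \expt{c\,G \mid J}$. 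For any convex $f\colon\bbR^2\to\bbR$ with $\expt{f(c\,G)}$ finite, conditional Jensen gives $\expt{f(W)} = \expt{f(\expt{c\,G\mid J})} \le \expt{f(c\,G)}$. Hence $W \lecx c\,G$, and it suffices to show that $W$ has the same law as $X$.

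\textbf{Step 1: the law of $J$.} The configuration $\{u_1,u_2,u_3\}$ is invariant under the rotation by $2\pi/3$ that cyclically permutes the $u_j$, and the law of $G$ is rotation invariant. Therefore $J$ is uniform on $[3]$, and $\expt{G \mid J=j}$ is the image of $\expt{G\mid J=1}$ under the corresponding rotation.

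\textbf{Step 2: the direction of the conditional mean.} The reflection fixing $u_1$ and swapping $u_2,u_3$ preserves $C_1$ and the law of $G$. Hence $\expt{G\mid J=1}$ is fixed by this reflection, so it is a multiple of $u_1$. By Step 1 it follows that $\expt{G\mid J=j} = \kappa u_j$ for a single scalar $\kappa$ and every $j$. To identify $\kappa$, take the inner product with $u_j$ and use $\normtwo{u_j}^2=2$:
\begin{align}
    2\kappa = \expt{\inprod{u_j}{G} \mid J=j} = \expt{\max_{i\in[3]}\inprod{u_i}{G} \mid J=j} . \notag
\end{align}
The right-hand side does not depend on $j$. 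Averaging over $j$ with weights $1/3$ and using \Cref{eqn:law} as in \Cref{eqn:fGd} gives $2\kappa = \sqrt{3}\,m_3$.

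\textbf{Step 3: matching the scaling.} By Steps 1 and 2, $W$ is uniform on $\brace{c\sqrt3\,m_3\,u_j/2}_{j=1}^3$. This coincides with the law of $X$ in \Cref{eqn:Xd} exactly when $c = 2/(\sqrt3\,\sigma_2\,m_3)$, which is the bound of \Cref{thm:highd} at $d=2$. To confirm this equals $\frac{8}{9}\sqrt{\pi\log(2)}$, use $\sigma_2 = \sqrt{3/(4\log 2)}$ from \Cref{eqn:sigmad} and the classical value $m_3 = \frac{3}{2\sqrt\pi}$. The latter can be checked via $\expt{\max(Z_1,Z_2,Z_3)} = \frac{3}{2}\expt{\max(Z_1,Z_2)} = \frac{3}{2}\cdot\frac{1}{\sqrt\pi}$, obtained by writing the maximum through pairwise maxima or by inclusion--exclusion. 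Substituting, $c = 2\cdot\frac{2\sqrt{\log 2}}{3}\cdot\frac{2\sqrt\pi}{3} = \frac{8}{9}\sqrt{\pi\log 2}$, as required.

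\textbf{Expected obstacle.} The main point needing care is the symmetry argument in Step 2, namely that the conditional mean is exactly parallel to $u_j$. Measure-theoretic details are minor: ties between the $\inprod{u_i}{G}$ are null events, and $G$ is integrable. Evaluating $m_3$ in closed form is the only explicit computation.
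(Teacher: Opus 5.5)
Your proof is correct, and its skeleton is the paper's. You index $G$ by the cone $C_j$ of the partition induced by $f(x)=\max_i\inprod{u_i}{x}$, show that the conditional mean of $cG$ given the cone has the law of $X$, and conclude by conditional Jensen. The difference is in how you evaluate $\expt{G\mid G\in C_j}$.

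The paper computes it directly in polar coordinates: it multiplies the Rayleigh mean $\sqrt{\pi/2}$ by the average of $(\cos\theta,\sin\theta)$ over an arc of angle $2\pi/3$. This is self-contained but tied to the plane.

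You instead get the direction from the reflection fixing $u_j$. You get the length by pairing with $u_j$ and averaging over $j$, which reduces the magnitude to $\expt{f(G)}=\sqrt3\,m_3$ from \Cref{eqn:fGd}, plus the closed form $m_3=3/(2\sqrt\pi)$. Your inclusion--exclusion identity $2m_3=3m_2$ with $m_2=1/\sqrt\pi$ is right.

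Your route buys two things.
\begin{enumerate}
\item It explains why the constant is exactly the bound of \Cref{thm:highd}. Since $f$ is linear on each cone, your $W$ satisfies $\expt{f(W)}=\expt{f(cG)}$. So matching $W$ with $X$ forces $c$ to equal the value at which this $f$ certifies the lower bound.
\item It is dimension-free. The stabilizer of $u_j$ in the symmetry group of the simplex fixes only the line through $u_j$, and $\normtwo{u_j}^2=d$. Together with \Cref{eqn:law}, the same argument gives $\expt{G\mid J=j}=\frac{\sqrt{d+1}\,m_{d+1}}{d}\,u_j$ for every $d\ge2$. Hence $X\lecx \frac{d}{\sqrt{d+1}\,\sigma_d\,m_{d+1}}\,G$ for the simplex construction \Cref{eqn:Xd} in all dimensions. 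That is, the sharpness stated in \Cref{rk:2d} for this $X$ holds for every $d$, not just $d=2$.
\end{enumerate}
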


\begin{proof}
For $d=2$, one can explicitly realize the construction of $X$ in \Cref{eqn:Xd} by identifying $ u_i = \sqrt{2} \, v_{i-1} $ ($ i\in\{1,2,3\} $) where 
\begin{align}
&&
    v_0 &\coloneqq \matrix{1 \\ 0} , & 
    v_1 &\coloneqq \matrix{-1/2 \\ \sqrt{3}/2} , & 
    v_2 &\coloneqq \matrix{-1/2 \\ -\sqrt{3}/2} . & 
& \notag 
\end{align}
are the vertices of an equilateral triangle centered around the origin, and can be alternatively written in polar coordinates as:
\begin{align}
&&
    v_j &= \matrix{\cos(\theta_j) \\ \sin(\theta_j)} , & 
    \textnormal{where } \theta_j &\coloneqq \frac{2\pi j}{3} . & 
& \notag 
\end{align}
Since $ \sigma_2 = \sqrt{\frac{3}{4\log(2)}} $ by definition \Cref{eqn:sigmad}, $X$ can be written as 
\begin{align}
&&
    X &\sim \frac{1}{3} \sum_{j = 0}^2 \delta_{\tau v_j} , & 
    \textnormal{where } \tau &\coloneqq \sqrt{\frac{8\log(2)}{3}} . & 
& \notag 
\end{align}
Denote $ Z \coloneqq \frac{8}{9} \sqrt{\pi\log(2)} \, G $. 
To prove convex domination, let us construct a coupling between $ X $ and $Z$. 
Partition $ \bbR^2 $ into three cones of equal angles: 
\begin{align}
    C_i &\coloneqq \brace{ x\in\bbR^2 : \inprod{v_i}{x} \ge \max_{0\le j\le 2} \inprod{v_j}{x} } 
    = \brace{ r \matrix{\cos(\theta) \\ \sin(\theta)} : r\ge0 , \, \theta \in \mleft[ \frac{(2i-1)\pi}{3}, \frac{(2i+1)\pi}{3} \mright) } , \notag 
\end{align}
for $ i \in \{0,1,2\} $. 
Define 
\begin{align}
    Y &\coloneqq \sum_{j = 0}^2 \tau v_j \indicator{G \in C_j} . \notag 
\end{align}
It is easy to see that $ Y \eqqlaw X $ since $ \prob{ Y = \tau v_j } = \prob{ G \in C_j } = 1/3 $ for every $ j\in\{0,1,2\} $. 
Moreover, we claim that 
\begin{align}
    \expt{ Z \mid Y } = Y . \label{eqn:EZY}
\end{align}
To see this, we note that 
\begin{align}
&& 
    \mleft. G \mid \brace{ G\in C_j } \mright. &\eqqlaw R \matrix{\cos(\Theta) \\ \sin(\Theta)} , & 
    \textnormal{where } (R, \Theta) \sim \sqrt{\chi_2^2} \ot \unif\paren{ \mleft[ \frac{(2j-1)\pi}{3}, \frac{(2j+1)\pi}{3} \mright) } , & 
& \notag 
\end{align}
which allows us to compute 
\begin{align}
    \expt{Z \mid Y = \tau v_j}
    &= \frac{8}{9} \sqrt{\pi\log(2)} \cdot \expt{G \mid G \in C_j}
    = \frac{8}{9} \sqrt{\pi\log(2)} \cdot \sqrt{\frac{\pi}{2}} \frac{1}{2\pi/3} \int_{(2j-1)\pi/3}^{(2j+1)\pi/3} \matrix{\cos(\theta) \\ \sin(\theta)} \diff \theta \notag \\
    &= \sqrt{\frac{8\log(2)}{3}} \matrix{\cos(2\pi j/3) \\ \sin(2\pi j/3)}
    = \tau v_j , \notag 
\end{align}
thereby justifying \Cref{eqn:EZY}. 
Using this in conjunction with the conditional Jensen's inequality, we have that for every convex function $ f \colon \bbR^2 \to \bbR $, $ f(Y) = f(\expt{Z \mid Y}) \le \expt{f(Z) \mid Y} $. 
Further averaging over $Y$, we conclude $ \expt{f(Y)} \le \expt{f(Z)} $. 
\end{proof}

\section{Proof of \texorpdfstring{\Cref{rk:highd}}{}}
\label{app:rk}

We prove that $ \cmgf(d) $ is a nondecreasing function. 

\begin{lemma}
\label{lem:mono}
For every $ d\in\bbZ_{\ge1} $, $ \cmgf(d+1) \ge \cmgf(d) $. 
\end{lemma}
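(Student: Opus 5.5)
We prove $\cmgf(d+1) \ge \cmgf(d)$ by embedding $d$-dimensional examples into $\bbR^{d+1}$. Concretely, we show that for any $c$ that works in dimension $d+1$ (that is, $X' \lecx c\,G'$ for all $X'\in\cG(d+1)$ with $G'\sim\cN(0_{d+1},I_{d+1})$), the same $c$ works in dimension $d$. Taking the infimum over such $c$ then yields the claim.

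First, fix $X\in\cG(d)$ and set $X' \coloneqq (X,0)\in\bbR^{d+1}$. We check that $X'\in\cG(d+1)$. Take any $v'=(v,v_{d+1})\in\bbS^{d}$. Then $\inprod{v'}{X'} = \inprod{v}{X}$.
\begin{itemize}
\item If $v\neq 0_d$, then $\inprod{v}{X} = \normtwo{v}\inprod{v/\normtwo{v}}{X}$ with $\normtwo{v}\le 1$. Hence
\[
\expt{e^{\lambda\inprod{v'}{X'}}} \le e^{\lambda^2\normtwo{v}^2/2} \le e^{\lambda^2/2}.
\]
\item If $v=0_d$, the MGF equals $1$, and the bound is trivial.
\end{itemize}

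Second, we transfer convex domination. Given a convex $f\colon\bbR^d\to\bbR$, define $f'(x,x_{d+1}) \coloneqq f(x)$, which is convex on $\bbR^{d+1}$. Write $G'=(G,G_{d+1})$, where $G\sim\cN(0_d,I_d)$ is the first $d$ coordinates. Then
\[
\expt{f(X)} = \expt{f'(X')} \le \expt{f'(c\,G')} = \expt{f(c\,G)},
\]
whenever the expectations exist. Existence transfers as well, since the relevant expectations are literally the same quantities. Therefore $X\lecx c\,G$ for every $X\in\cG(d)$.

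Third, we conclude. The set of admissible $c$ for dimension $d+1$ is contained in the set for dimension $d$, so the infimum defining $\cmgf(d)$ is at most $\cmgf(d+1)$.

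There is no real obstacle here. The only point requiring a moment's care is the sub-Gaussian check for directions $v'$ whose projection onto the first $d$ coordinates is not a unit vector. This is handled by the scaling argument in the first step, which uses that the bound $e^{\lambda^2 s^2/2}$ is monotone in $s\le 1$.
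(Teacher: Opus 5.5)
Your proof is correct and follows the paper's argument essentially step for step. You lift $X$ to $(X,0)$, check sub-Gaussianity by rescaling the first $d$ coordinates of the direction, and lift a convex $f$ to a function that ignores the last coordinate. Your ending, which shows every admissible $c$ for dimension $d+1$ is admissible for dimension $d$, is slightly cleaner than the paper's. The paper instead applies the defining property at $\cmgf(d+1)$ itself, which implicitly assumes the infimum is attained; your version does not need this.
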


\begin{proof}
This result is easily seen by lifting an arbitrary pair of $ X \in \cG(d) $ and $ f\colon\bbR^{d} \to \bbR $ convex to $ \bbR^{d+1} $. 
We briefly spell out the details below. 
Let $ X\in\cG(d) $ and $ Y = [X^\top , 0]^\top \in \bbR^{d+1} $. 
Let $ \lambda\in\bbR $ and $ u = [v^\top , w]^\top \in \bbR^{d+1} $ where $ (v,w)\in\bbR^d\times\bbR $ satisfies $ \normtwo{v}^2 + w^2 = 1 $. 
If $ v = 0_d $, then $ \expt{ e^{\lambda \inprod{Y}{u}} } = 1 \le e^{\lambda^2/2} $. 
Otherwise, 
\begin{align}
    \expt{e^{\lambda\inprod{Y}{u}}} &= \expt{e^{\lambda\inprod{X}{v}}} = \expt{e^{\lambda \normtwo{v} \cdot \inprod{X}{v/\normtwo{v}}}} \le e^{\lambda^2 \normtwo{v}^2 / 2} \le e^{\lambda^2/2} , \notag 
\end{align}
since $ \normtwo{v}^2\le1 $. 
Therefore, $ Y\in\cG(d+1) $. 

Now let $ f\colon\bbR^d \to \bbR $ be convex and define $ g\colon\bbR^{d+1}\to\bbR $ as $ g(v,w) = f(v) $ for any $ (v,w)\in\bbR^d\times\bbR $. 
Then $ g $ is also convex. 
The constant $ \cmgf(d+1) $ must satisfy $ \expt{g(Y)} \le \expt{g(\cmgf(d+1) \cdot (G, H))} $ where $ (G,H) \sim \cN(0_d,I_d) \ot \cN(0,1) $. 
This is equivalent to $ \expt{f(X)} \le \expt{f(\cmgf(d+1) \cdot G)} $. 
By minimality of $ \cmgf(d) $, we have $ \cmgf(d+1) \ge \cmgf(d) $, as desired. 
\end{proof}

\end{document}